\documentclass{amsart}
\usepackage{graphicx}
\usepackage{graphicx,esint}
\usepackage{amssymb,amscd,amsthm,amsxtra}
\usepackage{latexsym}
\usepackage{epsfig}
\usepackage{color} 
\newtheorem{thm}{Theorem}[section]

\newtheorem{lem}[thm]{Lemma}

\theoremstyle{definition}
\newtheorem{defn}[thm]{Definition}
\theoremstyle{remark}
\newtheorem{rem}[thm]{\bf Remark}
\numberwithin{equation}{section}
\newcommand{\R}{\mathbb R}
\newcommand{\be}{\begin{equation}}
\newcommand{\ee}{\end{equation}}

\newcommand{\ep}{\eps}

\newcommand{\eps}{\varepsilon}

\newcommand{\p}{\partial}

\newcommand{\comment}[1]{}
\begin{document}

\title[Lipschitz regularity to two-phase $p$-Laplacian problems]{Lipschitz regularity of solutions to two-phase $p$-Laplacian free boundary problems with right hand side}

\author[Fausto Ferrari]{Fausto Ferrari}
\address{Dipartimento di Matematica dell'Universit\`a di Bologna, Piazza di Porta S. Donato, 5, 40126 Bologna, Italy.}
\email{\tt fausto.ferrari@unibo.it}
\author[Claudia Lederman]{Claudia Lederman}
\address{IMAS - CONICET and Departamento  de
Ma\-te\-m\'a\-ti\-ca, Facultad de Ciencias Exactas y Naturales,
Universidad de Buenos Aires, (1428) Buenos Aires, Argentina.}
\email{\tt  clederma@dm.uba.ar}
\thanks{This work was partially
supported by a grant from the IMU-CDC and Simons Foundation}
\thanks{This research was partially funded by
 INDAM-GNAMPA grant CUP E53C23001670001 for visiting professors}
\thanks{F. F. was partially supported by
 PRIN 2022 7HX33Z - CUP J53D23003610006, Pattern formation in nonlinear phenomena,  INDAM-GNAMPA 2022 project: {\it Regolarit\`a locale e globale per
problemi completamente
non lineari} and INDAM-GNAMPA 2024 project: {\it Free boundary
problems in noncommutative structures and degenerate operators} CUP E53C23001670001.  }
\thanks{C. L. was partially supported   by the grants CONICET PIP 11220220100476CO 2023-2025,  UBACYT 20020220200056BA and ANPCyT PICT 2019-00985. C. L. wishes to thank the Department of Mathematics of the University of Bologna, Italy, for the kind hospitality.}

\keywords{free boundary problem, singular/degenerate operator, optimal regularity, non-zero right hand side, viscosity solutions, two-phase problem, $p$-Laplace operator.
\\
\indent 2020 {\it Mathematics Subject Classification.} 35R35,
35B65, 35J60, 35J70}

\begin{abstract}
We prove the local Lipschitz continuity of viscosity solutions for two-phase free boundary problems for the $p$-Laplacian with non-zero right hand side, where $p\in (1,\infty)$. This is the optimal regularity for the problem.

We also obtain the local H\"older continuity for a larger class of problems.

The results introduced here are new even in the homogeneous situation, that is, when the right hand side is zero.

Our work applies to merely viscosity solutions, which allows a wide applicability.

\end{abstract}

% ----------------------------------------------------------------
\maketitle
% ----------------------------------------------------------------

\section{Introduction and main results}\label{section1}

In this article we obtain the local Lipschitz continuity of viscosity solutions to  two-phase free boundary problems governed by the 
$p$-Laplacian  with non-zero right hand side.
This continues our work in \cite{FL1}, \cite{FL2} and \cite{FL3} (see \cite{FLS} as well).

More precisely, we denote by $$\Delta_{p} u:=\mbox{div} (|\nabla u|^{p-2}\nabla u),$$
where $p$ is a constant, $1<p<\infty$. Then, the problem we investigate here is  the following: 
\begin{equation}  \label{fbtrue}
\left\{
\begin{array}{ll}
\Delta_{p} u = f, & \hbox{in $\Omega^+(u)\cup \Omega^-(u)$}, \\
\  &  \\
u_\nu^+=G(u_\nu^-,x), & \hbox{on $F(u):= \partial \Omega^+(u) \cap
\Omega,$} 
\end{array}
\right.
\end{equation}
where $\Omega  \subset \mathbb{R}^n$ is a bounded domain and
$$\Omega^+(u):= \{x \in \Omega : u(x)>0\}, \qquad\quad \Omega^-(u):=\{x \in \Omega : u(x)\leq 0\}^{\circ },$$
 while $u_\nu^+$ and $u_\nu^-$ denote the normal derivatives in the inward
direction to $\Omega^+(u)$ and $\Omega^-(u)$ respectively. $F(u)$ is called the \emph{free boundary} of $u$.
Furthermore, $f\in L^{\infty }(\Omega )$ is continuous in $\Omega^+(u) \cup \Omega^-(u)$.

The function 
$$G(t,x):[0,\infty)\times\Omega\rightarrow(0,\infty),$$
$G(\cdot, x)\in C^2(0,\infty)$ for $x\in \Omega$, 
satisfies:

\begin{itemize}
\item[(P1)]
$G(\cdot,x)$ is strictly increasing for $x\in \Omega$  and $G(t,x)\to \infty$ as $t\to \infty$, uniformly in $x\in \Omega$.
\end{itemize}

In our main result we assume that $G(t,x)$ behaves like $t$, for $t$ large, uniformly in $x$. More precisely, we assume that :

\begin{itemize}
\item[(P2)] $\frac{\partial G(t,x)}{\partial t}\to 1$, $\frac{\partial^2G(t,x)}{\partial t^2}=O(\frac{1}{t})$, as $t\to\infty$, uniformly in $x\in \Omega$,

\item[(P3)] $G(t,\cdot)\in C^{0,\bar\gamma}(\Omega)$ uniformly in $t$, for some $0<\bar{\gamma}<1$.
\end{itemize}

When $f\not\equiv 0$ and $p\neq 2$ we assume as well that:
\begin{itemize}
\item[(P4)] $G(t,x)\equiv G(t)$, with $t^{-1}G(t)$ strictly decreasing, for $t$ large.
\end{itemize}

This research includes $G(t, x)=(g(x)+t^p)^{\frac{1}{p}}$, $p>1$, where $g\in C^{0,\bar{\gamma}}(\Omega)$, for some $0<\bar{\gamma}<1$, $g>0$,  which arises in several applications (see for instance, Appendix A in \cite{FL3}).

Our assumptions on $f$ are
\begin{equation}\label{assump-f}
f\in L^{\infty}(\Omega),\qquad f\text{ is continuous in }\Omega^+(u)\cup \Omega^-(u).
\end{equation}
Nevertheless, the arguments also apply when  $f$ is merely bounded measurable, but we require \eqref{assump-f} to avoid technicalities.

The main result in this work is the following one (for notation and the precise definition of viscosity solution to \eqref{fbtrue} we refer to Section \ref{section2}).
\begin{thm}[Optimal regularity]\label{Lipschitz_cont}
Let $u$ be a viscosity solution to \eqref{fbtrue} in $B_1$. Assume that \eqref{assump-f} holds in $B_1$ and $G$ satisfies assumptions (P1), 
(P2) and (P3) in $B_1$. If $f\not\equiv 0$ and $p\neq 2$   assume that also (P4) holds, then 
\begin{equation*}
\|\nabla u\|_{L^\infty(B_{1/2})}\leq C (\|u\|_{L^\infty(B_{3/4})}+1),
\end{equation*}
where $C=C(n,p,\|f\|_{L^\infty(B_1)}, G)$ is a positive constant.
\end{thm}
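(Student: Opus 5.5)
Our plan is the standard route for Lipschitz regularity in two-phase problems: interior gradient estimates away from $F(u)$, combined with linear growth of $u$ away from the free boundary, obtained by a blow-up (compactness) argument by contradiction. Recall the interior $C^{1,\alpha}$ estimates for $\Delta_p v=f$ with bounded $f$: if $|\Delta_p v|\le \|f\|_\infty$ in $B_r(x_0)$, then
\[
|\nabla v(x_0)|\le C\Big(\frac{\|v\|_{L^\infty(B_r(x_0))}}{r}+\big(r\|f\|_\infty\big)^{\frac1{p-1}}\Big).
\]
Take $x_0\in B_{1/2}\cap(\Omega^+(u)\cup\Omega^-(u))$ and let $d=\operatorname{dist}(x_0,F(u))$. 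If $d\ge 1/8$ the estimate gives the bound at once. If $d<1/8$, choose $y_0\in F(u)$ with $|x_0-y_0|=d$, so that $B_d(x_0)\subset B_{2d}(y_0)$, and apply the estimate in $B_d(x_0)$. It then suffices to prove the linear growth bound
\[
\sup_{B_r(y_0)}|u|\le C(\|u\|_{L^\infty(B_{3/4})}+1)\,r\qquad\text{for } y_0\in F(u)\cap B_{5/8},\ r\le 1/8. \tag{$\ast$}
\]
Points of $\{u=0\}^\circ$ are trivial, and the free boundary has measure zero or is handled by continuity. We also use the local H\"older continuity, which the paper obtains for the larger class of problems, to get equicontinuity.

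To prove $(\ast)$ we argue by contradiction. Suppose there are solutions $u_k$ with $\|u_k\|_{L^\infty(B_{3/4})}\le 1$ (after normalization; the $+1$ absorbs the scaling of $f$ and $G$), points $y_k\in F(u_k)$ and radii $r_k$ with $\sup_{B_{r_k}(y_k)}|u_k|/r_k\to\infty$. By a standard selection of a ``maximal'' scale we may choose $r_k$ so that $M_k:=\sup_{B_{r_k}(y_k)}|u_k|/r_k\to\infty$ and $\sup_{B_{2^j r_k}(y_k)}|u_k|\le 2^{j}M_k r_k$, up to a factor, for $2^jr_k\le 1/8$. Since $M_k\le r_k^{-1}$, we get $r_k\to0$. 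Rescale by $v_k(x)=u_k(y_k+r_kx)/(M_kr_k)$. Then $\sup_{B_1}|v_k|=1$, $|v_k|\le C|x|$ on growing balls, and $\Delta_p v_k=r_kM_k^{1-p}f(y_k+r_kx)\to0$ uniformly. The free boundary condition becomes $(v_k)^+_\nu=M_k^{-1}G(M_k(v_k)^-_\nu,\cdot)$. By (P1)--(P3) the map $t\mapsto M_k^{-1}G(M_kt,x)$ converges locally uniformly to $t$ on $(0,\infty)$. On $\{t\text{ small}\}$ it converges to $0$, because $G$ is bounded there, and this is the key place where the lower bound $G>0$ becomes negligible. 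By H\"older estimates $v_k\to v$ locally uniformly. The limit $v$ is $p$-harmonic in each phase, has at most linear growth, satisfies $v(0)=0$ and $\sup_{B_1}|v|=1$, and is a viscosity solution of the one-sided limit problem $v^+_\nu=v^-_\nu$, i.e.\ $|\nabla v^+|=|\nabla v^-|$ where both are positive, and $v^+_\nu\le 0$-type conditions where one side degenerates. Here we need the stability of viscosity solutions under uniform convergence, which we take from Section~\ref{section2}. When $f\not\equiv0$ and $p\neq2$, (P4) would be used to build the comparison barriers in this stability step, since the rescaled operator and condition do not commute nicely with translations.

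The conclusion comes from a Liouville-type classification. A viscosity solution in $\R^n$ of the limiting transmission problem with linear growth should be $p$-harmonic across $F(v)$, hence a global $p$-harmonic function of linear growth, hence linear. The last step uses the gradient estimate at scale $R\to\infty$ together with the $C^{1,\alpha}$ Liouville theorem. The contradiction must then be extracted more carefully, since a linear limit is not itself contradictory. Refining the choice of scales so that the normalized sup at scale $1$ dominates that at scale $1/2$ by a factor larger than $2$ excludes linear functions. Alternatively, one can use the monotonicity and iteration in the spirit of Caffarelli's argument, comparing the growth at dyadic scales. I expect the main obstacle to be exactly this step: justifying that the viscosity limit with $v^+_\nu=v^-_\nu$ is $p$-harmonic across the free boundary, with only viscosity information and no Alt--Caffarelli--Friedman monotonicity formula available for $p\ne2$. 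The first thing we would try is to show that $v$ is a viscosity $p$-supersolution and subsolution directly. Let $\phi$ touch $v$ at a free boundary point. If $\nabla\phi\neq0$, the level set of $\phi$ gives a one-phase touching that contradicts the free boundary inequality. If $\nabla\phi=0$, we use the $p$-Laplacian viscosity definition, which only tests nonvanishing gradients.
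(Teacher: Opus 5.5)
Your argument has a genuine gap at exactly the point you flag: a linear limit ``is not itself contradictory''. With the normalization $\sup_{B_1}|v_k|=1$ and controlled growth at scales $\ge 1$, the stability step and the Liouville theorem give a limit $v=a\cdot x$ with $|a|\sim 1$. This is the generic outcome, not an exceptional one. Moreover, the limit carries no information about $v_k$ at scales $\ll 1$, where $r^{-1}\sup_{B_r}|v_k|$ may still blow up.

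Neither proposed fix works.
\begin{itemize}
\item A scale selection forcing $\sup_{B_{1/2}}|v_k|\ge \frac{1+\eta}{2}\sup_{B_1}|v_k|$ with a fixed $\eta>0$ cannot be arranged a priori. With $a(r)=r^{-1}\sup_{B_r}|u|$, the ratio $a(r/2)/a(r)$ can tend to $1$ precisely at the scales where $a(r)\to\infty$; think of growth like $r\log(1/r)$.
\item The monotonicity-formula route is exactly what is unavailable for $p\neq 2$.
\end{itemize}
Your preliminary reduction (interior gradient estimates plus linear growth at free boundary points) and your sketch that the limit is $p$-harmonic across $\{v=0\}$ are fine. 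The latter is essentially Lemma \ref{limit_problem_second}.

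The missing ingredient is an $\varepsilon$-regularity theorem for the free boundary. It says: if a solution is close in $B_1$ to a two-plane function $U_\beta$ with $\beta$ bounded away from $0$ and $\infty$, with small right-hand side and free boundary law close to $G^*(t)=t$, then $F(u)$ is $C^{1,\gamma}$ in $B_{1/2}$. It also says $u$ is $C^{1,\gamma}$ up to $F(u)$ from both sides, with universal bounds. In particular $u$ is uniformly Lipschitz at all smaller scales, which is the quantitative control below the blow-up scale that your argument lacks.

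The paper takes this from its flatness theory (Theorems \ref{main_new-nondeg-general-rhs0}, \ref{main_new-nondeg-general-p=2}, \ref{main_new-nondeg-general-G(t)}). In Remark \ref{further_remark} it checks that the rescaled laws $G(C_kt,x)/C_k$ satisfy (H1), (H2) and (H3') or (H3'') with constants independent of $k$. That is where $\partial_t^2G=O(1/t)$ and (P4) are really used. They are not needed in the stability step, which only requires $f_k\to 0$ and $G_k\to t$.

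The proof is then organized as the dichotomy of Lemma \ref{boundness}. After rescaling by $C_k$, the limit $u^*$ is $p$-harmonic, so $u^*$ is $C^{1,\alpha}$-close to $a\cdot x$.
\begin{itemize}
\item If $|a|\le 1/4$, one gets the decay $\delta^{-1}\|u\|_{L^\infty(B_\delta)}\le \frac12\max\{\|u\|_{L^\infty(B_1)},L_0\}$.
\item If $|a|>1/4$, then $\tilde u_k$ is flat, and the flatness theorem gives a uniform Lipschitz bound.
\end{itemize}
Iterating this dichotomy over the scales $\delta^k$ yields linear growth at free boundary points. Without the flatness theorem, or some substitute with the same quantitative content, your contradiction argument does not close.
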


Let us point out that this is the optimal regularity for the problem.

We here follow the approach introduced in  \cite{DS} for uniformly elliptic fully nonlinear operators.

The heuristics behind the proof of Theorem \ref{Lipschitz_cont} is that, in the regime of {\it big gradients}, the free boundary condition becomes a continuity  condition for the gradient (i.e., no-jump). 
As a consequence, the desired gradient bound follows from interior $C^{1,\alpha}$ estimates for the $p$-Laplace operator and from free boundary regularity results for two-phase viscosity solutions to \eqref{fbtrue} proven in our recent paper \cite{FL3}.

In the research on two-phase free boundary problems a central question is the optimal regularity of  solutions. In \cite{CJK} the authors proved  a monotonicity formula that allows to obtain the local Lipschitz continuity of solutions to \eqref{fbtrue} in the case of $p=2$, that is, when the equation is linear. {\it No such monotonicity formula is known up to now for the general case $p\neq 2$}.

We remark that in the present article  we are concerned with the question of local Lipschitz continuity of {\it viscosity solutions} for two-phase free boundary problems for the $p$-Laplacian {\it with non-zero right hand side}.

The results introduced here are new even in the homogeneous situation, that is, when $f\equiv 0$, since, to our knowledge, the only regularity result existing in literature, about Lipschitz continuity across the free boundary, concerns minima of the $p$-Bernoulli functional in the homogeneous case, see \cite{DK}.

Our work applies to merely viscosity solutions, which allows a wide applicability.

We point out that in the one  phase case, i.e., when $u\ge 0$, the local Lipschitz continuity of solutions to  \eqref{fbtrue} follows from our previous paper \cite{FL2}, where a more general one phase free boundary problem was treated.

We remark that carrying out for the inhomogeneous $p$-Laplace operator a strategy originally devised  for
 {\it uniformly elliptic operators} presents
challenging difficulties due to the type of nonlinear behavior of the $p$-Laplacian. This was already the case in our previous works \cite{FL1}, \cite{FL2} and \cite{FL3} for the treatment of this problem.  In fact, the $p$-Laplacian  is a nonlinear operator that appears naturally in divergence structure from minimization problems, i.e., in the form ${\rm div}A(\nabla u)=f(x)$, with
\begin{equation}\label{Fan-1.6}
\lambda |\eta|^{p-2}|\xi|^2\le\sum_{i,j=1}^n\frac{\partial A_i}{\partial \eta_j}(\eta)\xi_i\xi_j\le \Lambda |\eta|^{p-2}|\xi|^2, \quad \xi\in\R^n,
\end{equation}
where $0<\lambda\le\Lambda$. This operator is singular  when $1<p<2$  and  degenerate when $p>2$. Dealing with this problem   in the presence of {\it a non-zero right hand side} is very delicate since,  in this case, {\it we can not neglect the factor  $|\eta|^{p-2}$ in \eqref{Fan-1.6}}. 

In particular, the discussion at the end of Section 3.2 in \cite{DS}, concerning the generalization of their results to some equations with zero right hand side, {\it does not apply to our inhomogeneous problem  \eqref{fbtrue}}.

The process of obtaining the local Lipschitz continuity of viscosity solutions to \eqref{fbtrue} we follow requires to prove first the local H\"older continuity of viscosity solutions to this problem ---which is interesting by itself. In fact, we get this result under more general conditions on $G$. The precise result  is the following: 
\begin{thm}[Local H\"older continuity]\label{holder_reg}
Let $u$ be a viscosity solution to \eqref{fbtrue} in $B_1$. Assume that, for $\sigma>0$ and $M>0$, there holds
\begin{equation}\label{G2}
\sigma t\leq G(t, x)\leq \sigma^{-1}t,\quad \mbox{for}\:\:t>M,\: x\in B_1.
\end{equation}
Then $u\in C^{0,\alpha}(B_{1/2})$, for some $0<\alpha<1$, and
$$
\|u\|_{C^{0,\alpha}(B_{1/2})}\leq C(\|u\|_{L^{\infty}(B_{3/4})}+1),
$$
where  $\alpha$ depends only $\sigma, n$ and $p$,  and $C>0$ depends only $\sigma, M, n, p$ and $\|f\|_{L^{\infty}(B_1)}$. 
\end{thm}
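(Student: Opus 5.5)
We prove Theorem \ref{holder_reg} by the oscillation-decay scheme of \cite{DS}, adapted to the $p$-Laplacian. The core is a decay estimate: there are universal $\eta\in(0,1)$ and $\rho\in(0,1/4)$, depending only on $\sigma,n,p$, such that the following holds. Let $u$ be a viscosity solution in $B_1$ with $\|u\|_{L^\infty(B_1)}\le 1$. Suppose $f$ and the constant $M$ in \eqref{G2} are small, namely $\|f\|_{L^\infty}\le\delta$ and $M\le\delta$. Then
$$\operatorname{osc}_{B_\rho}u\le (1-\eta)\operatorname{osc}_{B_1}u,$$
and the same holds for the rescalings
$$u_r(x)=\frac{u(x_0+rx)}{r^{\alpha}}.$$
Under this scaling the right hand side becomes $r^{p-\alpha(p-1)}f$, which is small since $p-\alpha(p-1)>0$ for $\alpha<1$. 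The free boundary condition for $u_r$ is $G_r(t,x)=r^{\alpha-1}G(r^{1-\alpha}t,x_0+rx)$. It still satisfies \eqref{G2} with the same $\sigma$, with threshold $M_r=r^{\alpha-1}M$.

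This threshold grows under rescaling, which is the main obstacle. I would handle it with the normalization $\operatorname{osc}\ge$ const: the iteration runs only while the oscillation of $u$ at scale $r$ exceeds $Cr^{\alpha}$. Otherwise the H\"older bound at that scale holds trivially, and the "$+1$" in the estimate absorbs it. In the nontrivial regime gradients are at least of order $r^{\alpha-1}$, hence large compared with $M$, so only the linear growth $\sigma t\le G\le\sigma^{-1}t$ is used.

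To prove the decay at a point, first suppose $B_{1/2}$ lies in one phase. Then the interior H\"older (indeed $C^{1,\alpha}$) estimates for $\Delta_p u=f$ give the decay directly. Next suppose $0\in F(u)$. Split into two cases according to whether $\{u>0\}$ or $\{u\le 0\}$ occupies at least half of $B_{1/2}$ in measure. In the first case, we show $u^-$ drops: $\sup_{B_\rho}u^-\le(1-\eta)\sup_{B_1}u^-$. In the second case, symmetrically, $u^+$ drops.

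The key tool is a comparison with explicit barriers. Take radial $p$-harmonic-type functions $w$, perturbed by $\pm\delta|x|^{p'}$ to absorb $f$. Such a $w$ is a strict subsolution (or supersolution) of the free boundary problem, as in the definition in Section \ref{section2}. The free boundary condition $w_\nu^+<G(w_\nu^-)$ is checked using \eqref{G2}: if the gradient of one phase is multiplied by $\sigma$, the condition is preserved as long as gradients exceed $M$.

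The measure information is converted into a quantitative positivity bound via a weak Harnack inequality for supersolutions of $\Delta_p v\le\delta$. This applies to $u^+$ on its positivity set, extended by zero, which is a viscosity supersolution across $F(u)$ because $u^+_\nu\ge\sigma u^-_\nu\ge0$. The bound lets us slide the barrier from below until it touches. The viscosity definition forbids a touching point on $F(u)$ by the strict barrier, and also forbids an interior one, which forces the decay.

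Finally, iterate the decay at all points $x_0\in B_{1/2}$ and scales $\rho^k$, after normalizing by $\|u\|_{L^\infty(B_{3/4})}+1$. This yields $\alpha=\log(1-\eta)/\log\rho$, depending only on $\sigma,n,p$, and $C$ depending also on $M$ and $\|f\|_\infty$ through the initial smallness reduction (covering and scaling).
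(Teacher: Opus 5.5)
There is a genuine gap in the decay step. Your dichotomy ($\{u>0\}$ fills half of $B_{1/2}$ $\Rightarrow$ $u^-$ drops; otherwise $u^+$ drops) only ever makes one phase decay. That gives no decay of $\operatorname{osc} u$, or of $\|u\|_{L^\infty}$, when the decaying phase is already negligible. Take $\sup_{B_1}u^-=\epsilon\ll 1=\sup_{B_1}u^+$ with $\{u>0\}$ filling most of $B_{1/2}$. Then $u^-$ decays, but that is just the weak Harnack inequality for the \emph{subsolution} $u^-$ (Lemma \ref{weak-harnack-p-lap}), needing no barrier and no free boundary condition. Meanwhile $\operatorname{osc}_{B_\rho}u$ may stay close to $1$.

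The real content of the two-phase problem is showing that the \emph{dominant} phase decays when the other is small. This is where the free boundary condition must enter, and your proposal never isolates this regime. The paper reaches it as follows.
\begin{itemize}
\item It iterates the alternative of Lemma \ref{weak-harnack-p-lap} on $r_k=2^{-k}$, $k\le\bar k$. Either $u^+$ decays at some scale, or $u^-$ decays at every scale, so $u^-\le\varepsilon_0$ in $B_{r_{\bar k}}$.
\item In the latter case, suppose $u(x_0)>1/2$ for some $x_0\in B_{r_{\bar k}/4}$, and let $d=\operatorname{dist}(x_0,F(u))=|x_0-y_0|$. Interior Harnack in $B_d(x_0)\subset\{u>0\}$ gives $u\ge c_0$ on $B_{d/2}(x_0)$.
\item The barrier $w=\psi^+-\frac{\sigma}{2}\psi^-$, with $\psi=c(|x-x_0|^{-\gamma}-d^{-\gamma})$, stays below $u$ on $B_{2d}(x_0)\setminus\overline{B_{d/2}(x_0)}$ precisely because $u^-\le\varepsilon_0$. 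It touches $u$ at $y_0$ with $|\nabla\psi(y_0)|>G(\frac{\sigma}{2}|\nabla\psi(y_0)|,y_0)$, since $\frac{\sigma}{2}|\nabla\psi(y_0)|>m$. This contradicts Definition \ref{defnhsol1bis}, so $u^+\le 1/2$ near $0$.
\end{itemize}

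Your positivity mechanism is also incorrect. $u^+=\max\{u,0\}$ has a convex kink on $F(u)$ and is a viscosity \emph{subsolution} there, not a supersolution. Already for the model $a x_n^+$, the function $\phi=\frac a2x_n+Kx_n^2$ touches it from below at $0$ with $\Delta_p\phi(0)=2K(p-1)(a/2)^{p-2}\to\infty$. So no weak Harnack inequality for supersolutions can turn measure information into a lower bound for $u^+$ across $F(u)$. The paper instead gets positivity from a point value plus interior Harnack inside the positive phase.

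Note also that a barrier pushed from below must satisfy the strict subsolution condition $w^+_\nu>G(w^-_\nu)$, not $w^+_\nu<G(w^-_\nu)$.

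Finally, your rescaling of $G$ is inverted. Since $\nabla u_r=r^{1-\alpha}\nabla u(x_0+rx)$, one gets $G_r(t,x)=r^{1-\alpha}G(r^{\alpha-1}t,x_0+rx)$, and the threshold becomes $r^{1-\alpha}M\le M$. It shrinks, so the obstacle you describe does not exist. The paper's induction $\|u\|_{L^\infty(B_{\delta^k})}\le\delta^{k\alpha}$ closes directly, and your oscillation-normalization workaround is harmless but unnecessary.
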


An important contribution of this article can be found in Section \ref{section3}, where we revisit \cite{FL3}. More precisely, we refine a $C^{1,\gamma}$ regularity result for flat free boundaries we proved in  \cite{FL3}, obtaining Theorem \ref{main_new-nondeg-general}. We develop, in addition, some improvements of it for the particular cases $f\equiv 0$, $p=2$ and $G(t,x)\equiv G(t)$. See Theorems \ref{main_new-nondeg-general-rhs0}, \ref{main_new-nondeg-general-p=2} and \ref{main_new-nondeg-general-G(t)} ---which are  key tools in the proof of Theorem \ref{Lipschitz_cont}. In this way, we are able to get Theorem  \ref{Lipschitz_cont} for very general free boundary conditions.
Moreover, we obtain some preliminary results, based on these theorems, that play a fundamental role in the proof of Theorem \ref{Lipschitz_cont} (see Remark \ref{further_remark}).

Let us mention that when the problem is truly nonlinear singular / degenerate with a non-zero right hand side ---i.e., when $f\not\equiv 0$ and $p\neq 2$--- due to the difficulties inherent to this situation, we are  assuming in our main result that (P4) also holds. We point out that assumptions of these type appear frequently in the study of free boundary problems. See, for instance, \cite{C1, C2, DFS1, DFS2}.

Let us also stress that we include here the case in which $p=2$ ---i.e., the case of Laplace operator--- relying on our work \cite{FL3} (see Section \ref{section3}). Notice that, although \cite{DS} includes Laplace operator, the case of a non-zero right hand side and a general free boundary condition $u_\nu^+=G(u_\nu^-,x)$ was not explicitly developed there. Hence, Theorem \ref{Lipschitz_cont}   gives an explicit  proof for the case $p=2$ which is different from the one that can be deduced from the monotonicity formula of \cite{CJK} ---{\it only available for this particular situation}.

As already mentioned, the main tool in the approach we follow in order to prove the local Lipschitz continuity, inspired by \cite{DS}, is a regularity result for flat free boundaries  for viscosity solution to free boundary problem \eqref{fbtrue}. Let us point out that in the uniformly elliptic linear and fully nonlinear cases, the corresponding results were obtained in \cite{DFS1} and \cite{DFS2} respectively. It is worth mentioning that in these papers it was assumed, all along the works,  the  Lipschitz continuity of the viscosity solutions under consideration. However, in \cite{DS} it was stated a theorem ---that can be derived from the results in \cite{DFS2}--- concerning the free boundary regularity for the fully nonlinear uniformly elliptic case, which holds under a nondegenerate situation, that does not assume the Lipschitz continuity of solutions.

The present article makes use of a free boundary regularity theorem for viscosity solutions to problem \eqref{fbtrue}, that we proved in \cite{FL3}, where the Lipschitz continuity of solutions was not assumed.  We believe that this will contribute to the comprehension of the powerful tools developed in \cite{DS} ---both for the problem studied in the present paper and for the uniformly elliptic case studied in \cite{DS}.

Let us emphasize once more that the fact that we are dealing with a nonlinear singular / degenerate operator with a non-zero right hand side introduces challenging difficulties that we overcome in the present research, with the aid of the companion paper \cite{FL3}.

The paper is organized as follows.
In Section \ref{section2}, we provide basic definitions and notation. In Section \ref{section3}, we revisit a $C^{1,\gamma}$ regularity result for flat free boundaries of viscosity solutions to \eqref{fbtrue}  we proved in \cite{FL3} (see Theorem \ref{main_new-nondeg-general}).  We also develop some improvements of it for the cases $f\equiv 0$, $p=2$ and $G(t,x)\equiv G(t)$ 
(i.e., Theorems \ref{main_new-nondeg-general-rhs0}, \ref{main_new-nondeg-general-p=2} and \ref{main_new-nondeg-general-G(t)}). Moreover, we obtain some preliminary results (see Remark \ref{further_remark}).
 In Section \ref{section4}
we prove Theorem \ref{holder_reg}, which gives the local H\"older continuity of viscosity solutions  for a general class of free boundary problems of type \eqref{fbtrue}.  Finally, in Section \ref{section5}, we prove the local Lipschitz continuity of viscosity solutions to \eqref{fbtrue} namely, Theorem \ref{Lipschitz_cont}. 

\begin{subsection}{Assumptions}\label{assump}

\smallskip

Throughout the paper we let $\Omega\subset\R^n$  be a bounded domain.

\bigskip

\noindent{\bf Assumptions on $p$.} We
assume that
\begin{equation*}
1<p<\infty.
\end{equation*}

\bigskip

\noindent{\bf Assumptions on $f$.} We  assume that 
\begin{equation}\label{assump-f-2}
f\in L^{\infty}(\Omega),\qquad f\text{ is continuous in }\Omega^+(u)\cup \Omega^-(u).
\end{equation}
Nevertheless, the arguments also hold when $f$ is merely bounded measurable, but we require \eqref{assump-f-2} to avoid technicalities.
                                                                                           
\bigskip

\noindent{\bf Assumptions on $G$.}  We assume that  $$G(t,x):[0,\infty)\times\Omega\rightarrow(0,\infty),$$
$G(\cdot, x)\in C^2(0,\infty)$ for $x\in \Omega$,
satisfies 
\begin{itemize}
\item[(P1)]

$G(\cdot,x)$ is strictly increasing for $x\in \Omega$  and $G(t,x)\to \infty$ as $t\to \infty$, uniformly in $x\in \Omega$.
\end{itemize}

In our main result we assume that:

\begin{itemize}
\item[(P2)] $\frac{\partial G(t,x)}{\partial t}\to 1$, $\frac{\partial^2G(t,x)}{\partial t^2}=O(\frac{1}{t})$, as $t\to\infty$, uniformly in $x\in \Omega$,

\item[(P3)] $G(t,\cdot)\in C^{0,\bar\gamma}(\Omega)$ uniformly in $t$, for some $0<\bar{\gamma}<1$.
\end{itemize}

When $f\not\equiv 0$ and $p\neq 2$ we assume as well that:
\begin{itemize}
\item[(P4)] $G(t,x)\equiv G(t)$, with $t^{-1}G(t)$ strictly decreasing, for $t$ large.
\end{itemize}

\end{subsection}

\section{Basic definitions, notation and preliminaries}\label{section2}

In this section, we provide notation, basic definitions and some preliminaries that will be relevant for  our work.

\medskip

\noindent {\bf Notation.} For any continuous function $u:\Omega\subset \mathbb{R}^n\to \mathbb{R}$ we denote
\begin{equation*}
\Omega^+(u):= \{x \in \Omega : u(x)>0\},\quad \Omega^-(u):=\{x \in \Omega : u(x)\leq 0\}^{\circ }
\end{equation*} 
and
\begin{equation*}
F(u):= \partial \Omega^+(u) \cap \Omega. 
\end{equation*} 
We refer to the set $F(u)$ as the {\it free boundary} of $u$, while $\Omega^+(u)$ is its {\it positive phase} (or {\it side}) and $\Omega^-(u)$ is the {\it nonpositive phase}.

We begin with some remarks on the $p$-Laplacian. In particular, we  recall the relationship between the different notions of solutions to $\Delta_{p}u=f$ we are using, namely, weak and viscosity solutions. Then we give the definition of viscosity solution to problem \eqref{fbtrue}  and we deduce some consequences. We here refer to the usual definition of $C$-viscosity  sub/supersolution and solution of an elliptic PDE, see e.g., \cite{CIL}.

We start by observing that 
direct calculations show that, for $C^2$ functions $u$ such that $\nabla u(x)\not=0$ in some open set,
\begin{equation}\label{equation_nondivergence_form}
\begin{split}
&\Delta_{p}u=\mbox{div} (|\nabla u|^{p-2}\nabla u)=|\nabla u|^{p-2}\left(\Delta u+(p-2)\Delta_\infty^Nu\right),
\end{split}
\end{equation}
where
$$
\Delta_\infty^Nu:=\Big\langle D^2 u\frac{\nabla u}{|\nabla u|}\,,\,\frac{\nabla u}{|\nabla u|}\Big\rangle
$$
denotes the normalized $\infty$-Laplace operator.

We deduce, in addition, that
\begin{equation}\label{p(x)-vs-pucci}
\begin{split}
&|\nabla u|^{p-2}\mathcal{M}_{\lambda_0,\Lambda_0}^-(D^2u)
\leq \Delta_{p}u\leq |\nabla u|^{p-2}\mathcal{M}_{\lambda_0,\Lambda_0}^+(D^2u),
\end{split}
\end{equation}
where $\lambda_0:=\min\{1,p-1\}$ and $\Lambda_0:=\max\{1,p-1\}.$  
As usual, if $0<\lambda\leq \Lambda$ are numbers, and $e_i$ is the $i-$th  eigenvalue of the $n\times n$ symmetric matrix $M,$ then $\mathcal{M}_{\lambda,\Lambda}^+$ and $\mathcal{M}_{\lambda,\Lambda}^-$ denote the extremal Pucci operators and are defined  (see \cite{CC}) as
\begin{equation*}
\begin{split}
\mathcal{M}_{\lambda,\Lambda}^+(M)&=\lambda\sum_{e_i<0}e_i+\Lambda\sum_{e_i>0}e_i,\\
\mathcal{M}_{\lambda,\Lambda}^-(M)&=\Lambda\sum_{e_i<0}e_i+\lambda\sum_{e_i>0}e_i.
\end{split}
\end{equation*}

\medskip

First we need 
\begin{defn}\label{defnweak} 
Assume that $f\in L^{\infty}(\Omega)$.

We say that $u\in W^{1,p}(\Omega)$ 
is a weak supersolution of 
\begin{equation}\label{equation_name}
\Delta_{p}u=f,\quad  \mbox{in}\:\: \Omega,
\end{equation}
  if  for every  $\varphi \in
C_0^{\infty}(\Omega),$ $\varphi\geq 0$, there holds that
$$
-\int_{\Omega} |\nabla u(x)|^{p-2}\nabla u \cdot \nabla
\varphi\, dx \leq \int_{\Omega} \varphi\, f\, dx.
$$
Analogously, we say that $u\in W^{1,p}(\Omega)$ 
is a weak subsolution of 
\eqref{equation_name},
  if  for every  $\varphi \in
C_0^{\infty}(\Omega),$ $\varphi\geq 0$, there holds that
$$
-\int_{\Omega} |\nabla u(x)|^{p-2}\nabla u \cdot \nabla
\varphi\, dx \geq \int_{\Omega} \varphi\, f\, dx.
$$
Finally,  $u\in W^{1,p}(\Omega)$ is a weak solution to \eqref{equation_name} if it is both a weak sub- and supersolution.
\end{defn}

Next we recall the following standard notion.
\begin{defn}Given $u, v \in C(\Omega)$, we say that $v$
touches $u$ by below (resp. above) at $x_0 \in \Omega$ if $u(x_0)=
v(x_0)$ and
$$u(x) \geq v(x) \quad (\text{resp. $u(x) \leq
v(x)$}) \quad \text{in a neighborhood $O$ of $x_0$.}$$ If
this inequality is strict in $O \setminus \{x_0\}$, we say that
$v$ touches $u$ strictly by below (resp. above).
\end{defn}

\begin{defn}\label{defnhsolviscosity} Let $u$ and $f$ be  continuous functions in
$\Omega$. We say that $u$ is a viscosity solution to $ \Delta_{p} u = f$   in
$\Omega$, if the following conditions are satisfied:
\begin{enumerate}
\item[(a)] $u$ is a viscosity subsolution in $\Omega$. That is, for every $v\in C^2(\Omega),$  if $v$ touches $u$ from above at $x_0\in \Omega$ and $\nabla v(x_0)\not=0,$ then $\Delta_{p}v(x_0)\geq f(x_0)$.
\item[(b)] $u$ is a viscosity supersolution in $\Omega$. That is, for every $v\in C^2(\Omega),$  if $v$ touches $u$ from below at $x_0\in \Omega$ and $\nabla v(x_0)\not=0,$ then $\Delta_{p}v(x_0)\leq f(x_0)$.
\end{enumerate}
\end{defn}

\begin{rem} \label{equiv-not}
We point out that in \cite{JJ} and \cite{MO} it was shown the equivalence between weak and viscosity subsolutions, supersolutions and solutions to the $p$-Laplacian with right hand side $f\not\equiv 0$.  See \cite{JLM} 
for the case $f\equiv 0$ as well.
\end{rem}

In order to introduce the notion of viscosity solution to \eqref{fbtrue} we assume that 
$$G(t,x):[0,\infty)\times\Omega\rightarrow(0,\infty),$$
is continuous and strictly increasing.

We denote
$$t^+ = \max\{t,0\}, \quad t^-= -\min\{t,0\}.$$

Along the present paper we will use the following definition of viscosity solution to problem \eqref{fbtrue}.

\begin{defn}\label{defnhsol1bis} Let $u$ be a continuous function in
$\Omega$ and $f$ as in \eqref{assump-f}. We say that $u$ is a viscosity solution to (\ref{fbtrue}) in
$\Omega$ if:
\begin{enumerate}
\item $u$ satisfies $ \Delta_{p} u = f$ in $\Omega^+(u) \cup \Omega^-(u)$ in the viscosity sense.
\item  $u$ satisfies the free boundary condition 
$$
u_\nu^+=G(u_\nu^-, x), \quad \mbox{on}\quad F(u)
$$
in the following sense. Let $x_0 \in F(u)$, then for any unit vector $\nu$, there exists no function $\psi\in C^2$ defined in a neighborhood of $x_0$, with $\psi(x_0)=0$, $\nabla \psi (x_0)=\nu$, such that either of the following holds:
\begin{itemize}
\item[(1)] $a\psi^+-b\psi^-\leq u$, with $a>0$, $b>0$ and $a>G(b, x_0)$ (i.e., $u$ is a supersolution); 
\item[(2)] $a\psi^+-b\psi^-\geq u$, with $a>0$, $b>0$ and $a<G(b, x_0)$ (i.e., $u$ is a subsolution).
\end{itemize}
\end{enumerate}
\end{defn}

\begin{rem}
\label{equivalence_free_b_cond}
Condition (ii)-(1) in Definition \ref{defnhsol1bis} is equivalent to saying that we cannot touch by below $u$ at $x_0\in F(u)$ by a piecewise $C^2$ function 
$$
\varphi\in C^2(\{\varphi\geq 0\})\cap C^2(\{\varphi\leq 0\}),\:\:\mbox{with level surface}\:\: \{\varphi=0\}\:\: \mbox{of class}\:\: C^2 
$$
that satisfies 
\begin{equation}\label{freeboundaryassumption}
|\nabla \varphi^+(x_0)|>G(|\nabla \varphi^-(x_0)|, x_0)\:\:\mbox{and}\:\:|\nabla \varphi^-(x_0)|>0.
\end{equation}

A similar statement holds for condition (ii)-(2), for a function $\varphi$ touching $u$ by above at $x_0\in F(u)$, 
 if we replace \eqref{freeboundaryassumption} by 
\begin{equation*}
|\nabla \varphi^+(x_0)|<G(|\nabla \varphi^-(x_0)|, x_0)\:\:\mbox{and}\:\:|\nabla \varphi^+(x_0)|>0.
\end{equation*}
\end{rem}

\bigskip

Now we recall the definition of viscosity solution to problem (\ref{fbtrue}) that we use in \cite{FL3}.

\begin{defn}[Definition 7.1 in \cite{FL3}] \label{defnhsol1} Let $u$ be a continuous function in
$\Omega$ and $f$ as in \eqref{assump-f}. We say that $u$ is a viscosity solution to (\ref{fbtrue}) in
$\Omega$, if the following conditions are satisfied:
\begin{enumerate}
\item $ \Delta_{p} u = f$ in $\Omega^+(u) \cup \Omega^-(u)$ in the weak sense of Definition \ref{defnweak}.
\item  Let $x_0 \in F(u)$ and $v \in C^2(\overline{B^+(v)}) \cap C^2(\overline{B^-(v)})$ ($B=B_\delta(x_0)$) with $F(v) \in C^2$. If $v$ touches $u$ by below (resp. above) at $x_0 \in F(v)$, then 
$$
v_\nu^+(x_0) \leq G(v_\nu^-(x_0), x_0) \quad (\text{resp. $ \geq$)}.
$$
\end{enumerate}
\end{defn}

\smallskip

Next theorem follows as a consequence of Remark \ref{equiv-not}.

\begin{thm}\label{defnhsol2} Let $u$ be a continuous function in $\Omega$ and $f$ as in \eqref{assump-f}. Then $u$ is a viscosity solution to (\ref{fbtrue}) in
$\Omega$, in the sense of Definition \ref{defnhsol1}, if and only if the following conditions are satisfied:
\begin{enumerate}
\item $ \Delta_{p} u = f$ in $\Omega^+(u) \cup \Omega^-(u)$ in the
viscosity sense.
\item  Let $x_0 \in F(u)$ and $v \in C^2(\overline{B^+(v)}) \cap C^2(\overline{B^-(v)})$ ($B=B_\delta(x_0)$) with $F(v) \in C^2$. If $v$ touches $u$ by below (resp. above) at $x_0 \in F(v)$, then 
$$v_\nu^+(x_0) \leq G(v_\nu^-(x_0), x_0) \quad (\text{resp. $ \geq$)}.$$
\end{enumerate}
\end{thm}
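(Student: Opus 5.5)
The plan is to observe that Definition \ref{defnhsol1} and the characterization in Theorem \ref{defnhsol2} have word-for-word the same free boundary condition (ii). The only difference is condition (i): in Definition \ref{defnhsol1} the equation $\Delta_p u=f$ holds in $\Omega^+(u)\cup\Omega^-(u)$ in the weak sense of Definition \ref{defnweak}, while in Theorem \ref{defnhsol2} it holds in the viscosity sense of Definition \ref{defnhsolviscosity}. So the whole statement reduces to showing that these two versions of (i) are equivalent on the open set $\Omega^+(u)\cup\Omega^-(u)$.

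The first step is to check that the regularity hypotheses needed by the equivalence results cited in Remark \ref{equiv-not} (\cite{JJ}, \cite{MO}, and \cite{JLM} when $f\equiv0$) are available.
\begin{itemize}
\item Each connected component $D$ of the open set $\Omega^+(u)\cup\Omega^-(u)$ is open.
\item By \eqref{assump-f}, $f$ is bounded on $D$ and continuous there, which is exactly the setting of \cite{JJ}, \cite{MO}.
\item The function $u$ is continuous by assumption.
\end{itemize}
These results work on arbitrary open sets, so I would apply them on $D$, or on any open $D'\Subset D$ if they are formulated for bounded domains. The conclusion is that $u$ is a weak solution in $D$ if and only if it is a viscosity solution in $D$.

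Both notions are local. For the weak notion, test functions have compact support, and a partition of unity localizes them. For the viscosity notion, the touching condition is pointwise. Hence the equivalence on each $D'\Subset \Omega^\pm(u)$ gives the equivalence on all of $\Omega^+(u)\cup\Omega^-(u)$.

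One direction needs a small point of care. The weak formulation requires $u\in W^{1,p}_{\rm loc}$, while a viscosity solution is a priori only continuous. The cited theorems handle this by first showing that a continuous viscosity supersolution is $p$-superharmonic in the appropriate sense, which yields $u\in W^{1,p}_{\rm loc}$ and then the weak inequality. I would simply invoke this, and interpret $W^{1,p}(\Omega)$ in Definition \ref{defnweak} locally on the relevant open sets. I expect this regularity and localization point to be the only real obstacle.

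Combining (i) with the identical condition (ii) gives the theorem.
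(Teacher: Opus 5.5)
Your proposal is correct and matches the paper. The paper also notes that condition (ii) is identical in both formulations and obtains the equivalence of condition (i) directly from the weak/viscosity equivalence results cited in Remark \ref{equiv-not}. Your extra care about localizing to open subsets and reading $W^{1,p}$ as $W^{1,p}_{\rm loc}$ fills in details the paper leaves implicit.
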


\begin{rem}\label{rmk9}
In Definition \ref{defnhsol1bis} (see Remark \ref{equivalence_free_b_cond}) we are requiring that $u$ be tested against functions $\varphi$  such that  $\nabla\varphi^-\not=0$ on $F(u),$ while in Definition \ref{defnhsol1} we are requiring that $u$ be tested against a larger set of functions $\varphi$, since no assumptions on $\nabla \varphi^-$ on $F(u)$ is made.
\end{rem}

\medskip

We will need, in addition,

\begin{defn}
\label{defsubcv} We say that $v \in C(\Omega)$ is a  (strict)
comparison subsolution (resp. supersolution) to \eqref{fbtrue} in $\Omega$,
if  $v\in C^2(\overline{\Omega^+(v)}) \cap C^2(\overline{\Omega^-(v)})$,   $\nabla v\not =0$ in $\overline{\Omega^+(v)}\cup \overline{\Omega^-(v)}$ and the following conditions are satisfied:
\begin{enumerate}
\item $\Delta_{p}v> f $ (resp. $< f $) in $\Omega^+(v) \cup \Omega^-(v)$ (see Remark \ref{comparison-sub-sup}). 
\item If $x_0 \in F(v)$, then
\begin{equation*}
v_\nu^+(x_0)>G(v_\nu^-(x_0),x_0) \quad (\text{resp. $v_\nu^+(x_0)<G(v_\nu^-(x_0),x_0)$).}
\end{equation*}
\end{enumerate}
\end{defn}

 \begin{rem}\label{comparison-sub-sup}
Let $v$ be as in Definition \ref{defsubcv}. Since $v \in C^2(\Omega^+(v)\cup {\Omega^-(v) })$ and  $\nabla v\not =0$ in $\Omega^+(v)\cup {\Omega^-(v) }$  then  $ \Delta_{p} v  > f $ (resp. $<f$) in  $\Omega^+(v)\cup {\Omega^-(v) }$ is understood  pointwise, in the sense of \eqref{equation_nondivergence_form}.  
\end{rem}

\begin{rem}\label{fb-C2}
Notice that by the implicit function theorem, according to our definition, the free boundary of a comparison sub/supersolution is $C^2$.
\end{rem}
\begin{rem} \label{comp-not-touch}
Any (strict) comparison subsolution $v$ (resp. supersolution) cannot touch a viscosity solution $u$ by below (resp. by above) at a point $x_0 \in F(v) $ (resp. $F(u)$).
\end{rem}

\smallskip

\noindent {\bf Notation.}  From now on $B_{\rho}(x_0)\subset {\R}^n$ will denote the open ball of radius $\rho$ centered at $x_0$, and 
$B_{\rho}=B_{\rho}(0)$. Unless otherwise stated, a positive constant depending only on the dimension $n$ and $p$
 will be called universal.

 We will use $c$, $c_i$ to denote small universal constants and $C$, $C_i$ to denote large universal constants.

\section{Regularity of flat free boundaries}\label{section3}

In this section  we  refine a $C^{1,\gamma}$ regularity result for flat free boundaries we proved in  \cite{FL3}, obtaining Theorem \ref{main_new-nondeg-general}. 
We develop, in addition, some improvements of it for the particular cases $f\equiv 0$, $p=2$ and $G(t,x)\equiv G(t)$. 
See Theorems \ref{main_new-nondeg-general-rhs0}, \ref{main_new-nondeg-general-p=2} and \ref{main_new-nondeg-general-G(t)} ---which are  key tools in the proof of Theorem \ref{Lipschitz_cont}. Moreover, we obtain some preliminary results, based on these theorems, that play a fundamental role in the proof of Theorem \ref{Lipschitz_cont} (see Remark \ref{further_remark}).

In fact, in \cite{FL3} we studied problem \eqref{fbtrue} with $f$ satisfying \eqref{assump-f}. We assumed the following basic hypotheses on  the function $G$:
$$G(\eta,x):[0,\infty)\times\Omega\rightarrow(0,\infty)$$
and, for  $0<\hat{\beta}<L$,
\begin{itemize}
\item[(H1)]
$G(\eta,\cdot)\in C^{0,\bar\gamma}(\Omega)$ uniformly in $\eta\in [\frac{\hat{\beta}}{2},4L]$; $G(\cdot,x)\in C^{1,\bar\gamma}([\frac{\hat{\beta}}{2},4L])$ for every $x\in \Omega$ and $G\in L^\infty((\frac{\hat{\beta}}{2},4L)\times \Omega)$.
\item[(H2)]
$G'(\cdot, x)>0$ in $[\frac{\hat{\beta}}{2},4L]$ for $x\in \Omega$ and, for some $\gamma_0$  constant, $G\geq\gamma_0>0$ in $[\frac{\hat{\beta}}{2},4L]\times \Omega$.
\end{itemize}
These assumptions are complemented with the following structural conditions:
\begin{itemize}
\item[(H3)]
There exists $C>0$ such that
 $0\leq G'' (\cdot, x)\leq C$ in $[\frac{\hat{\beta}}{2},4L]$ for $x\in \Omega$. 
 \item[(H4)] There exists $\delta>0$ such that $$G(\eta, x)\geq \eta \frac{\partial G}{\partial \eta}(\eta, x) +\delta, \quad\mbox{for all}\:\: \frac{\hat{\beta}}{2}\leq \eta\leq 4L\:\: \mbox{and}\:\: x\in \Omega.$$
\end{itemize}

\medskip

In this section we  include improvements of our results in \cite{FL3}, obtained under the alternative assumptions on  the function $G$:
\begin{itemize}
\item[(H3')]
There exists $N>0$ such that $\eta^{-N}G(\eta,x)$ is  decreasing in $\eta\in [{\hat{\beta}},2L]$, uniformly in $x\in \Omega$.
\end{itemize}
or otherwise
\begin{itemize}
\item[(H3'')]
$G(t,x)\equiv G(t)$, with $\eta^{-1}G(\eta)$ strictly   decreasing in $\eta\in [{\hat{\beta}},2L]$.
\end{itemize}

\bigskip

Now, let us denote $U_\beta$ the one-dimensional function, $$U_\beta(t) = \alpha t^+ - \beta t^-, \quad  \beta \geq 0, \quad \alpha = G(\beta,0)=G_0(\beta),$$ where $$t^+ = \max\{t,0\}, \quad t^-= -\min\{t,0\}.$$

 Then $U_\beta(x)= U_\beta(x_n)$ is the so-called two-plane solution to \eqref{fbtrue} when $f \equiv 0$ and $G(\eta,x)\equiv G(\eta,0)=G_0(\eta)$.

\smallskip

As a consequence of our results in \cite{FL3}, the following theorem holds.

\begin{thm}[Flatness implies $C^{1,\gamma}$] \label{main_new-nondeg-general} Let $u$ be a viscosity solution to  \eqref{fbtrue} in $B_1$ in the sense of Definition \ref{defnhsol1bis}. Let $0<\hat{\beta}<L$.  Assume $f\in L^\infty(B_1)$ is continuous in $B_1^{+}(u)\cup B_1^{-}(u)$ and $G$ satisfies assumptions (H1)-(H2)-(H3)-(H4) in $B_1$. There exists a universal
constant $\bar \ep>0$ such that, if \begin{equation*}\|u - U_{\beta}\|_{L^{\infty}(B_{1})} \leq \bar \eps\quad 
\text{for some }\, 0 <\hat{\beta}\leq \beta \leq L,\end{equation*} 
 and $$\|f\|_{L^\infty(B_1)} \leq \bar \ep,$$
$$[G(\eta,\cdot)]_{C^{0,\bar \gamma}(B_1)} \leq \bar \ep
, \quad \text{for all }\, 0<\frac{\hat{\beta}}{2} \leq \eta \leq 4L,$$
then 
$$F(u) \text{ is } C^{1,\gamma} \text{ in } B_{1/2} \quad and \quad u\in C^{1,\gamma}(\overline{B^+_{1/2}(u)})\cap 
C^{1,\gamma}(\overline{B^-_{1/2}(u)}).$$ 
Here $\gamma$ is universal and the  $C^{1,\gamma}$ norm of $F(u)$ and of $u$ in $\overline{B^+_{1/2}(u)}$ and $\overline{B^-_{1/2}(u)}$ are bounded by a universal constant.

In the present theorem a constant depending only on $n$, $p$, $\hat\beta$, $L$,  $[G(\eta,\cdot)]_{C^{0,\bar\gamma}(B_1)}$, $\|G(\cdot, x)\|_{C^{1,\bar\gamma}([\frac{\hat{\beta}}{2},4L])}$, $\|G\|_{L^\infty((\frac{\hat{\beta}}{2},4L)\times B_1)}$, $\gamma_0$ and the constants $C$  and $\delta$ in assumptions (H3)-(H4)  is called universal.
\end{thm}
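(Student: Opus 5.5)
The plan is to deduce Theorem \ref{main_new-nondeg-general} directly from the flatness results of \cite{FL3}. The only new ingredient relative to \cite{FL3} is the notion of solution: here $u$ is assumed to be a viscosity solution in the sense of Definition \ref{defnhsol1bis}, which is weaker (see Remark \ref{rmk9}) than Definition \ref{defnhsol1} used in \cite{FL3}.

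The first step is to identify exactly where the free boundary condition is used in the proof of the flatness theorem of \cite{FL3}. The argument there is an improvement of flatness scheme: Harnack inequality, compactness toward the linearized transmission problem, and iteration. In that scheme, the free boundary condition enters only through comparison with explicit barriers. These barriers are strict comparison sub/supersolutions in the sense of Definition \ref{defsubcv}, built as perturbations of $U_\beta$, and the key fact used is Remark \ref{comp-not-touch}: such a barrier cannot touch $u$ at a free boundary point. By definition, a comparison function $v$ has $\nabla v\neq 0$ on $\overline{\Omega^\pm(v)}$. Since $\beta\ge\hat\beta>0$ and the perturbations are small, both one-sided gradients are close to $\beta$ and $\alpha=G_0(\beta)$, so they are bounded away from zero. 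Hence each such barrier is precisely an admissible test function in Remark \ref{equivalence_free_b_cond}, i.e.\ of the form $a\psi^+-b\psi^-$ with $a,b>0$ after writing $v^\pm$ as smooth multiples of a defining function $\psi$ of $F(v)$. So Definition \ref{defnhsol1bis} already forbids the touching.

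I will verify the same for the viscosity (rather than weak) interior equation using Theorem \ref{defnhsol2} and Remark \ref{equiv-not}. I will also check the final step of \cite{FL3}, which passes from $C^{1,\gamma}$ regularity of $F(u)$ to $C^{1,\gamma}$ regularity of $u$ up to the boundary on each side. That step uses only boundary regularity for $\Delta_p u=f$ with nondegenerate gradient, which holds because the gradients stay near $\alpha,\beta>0$, and not the free boundary condition.

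The main obstacle is the compactness/limit step, where one touches the linearized problem by test polynomials and transfers this back to $u$ via a viscosity-type argument. There one must make sure the constructed test functions also have $|\nabla\varphi^-|>0$ (respectively $|\nabla\varphi^+|>0$). I would handle this by noting that in the flat regime these functions are $\eps$-perturbations of $U_\beta$. Their negative-side slope is therefore $\beta+O(\eps)\ge\hat\beta/2>0$ and their positive-side slope is $\alpha+O(\eps)\ge\gamma_0/2$. This uses (H2) and the range $[\hat\beta/2,4L]$, which is exactly why the hypotheses (H1)--(H4) are imposed on that interval. With this check, every step of \cite{FL3} goes through unchanged, giving the stated universal $\gamma$ and bounds.
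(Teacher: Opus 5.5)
Your proposal follows the paper's argument. The paper invokes Theorem 1.3 of \cite{FL3}, notes (Remark \ref{rmk9}) that its proof only tests the free boundary condition against functions with both one-sided slopes nonzero, so Definition \ref{defnhsol1bis} suffices, and then obtains regularity of $u$ up to $F(u)$ from boundary estimates for $\Delta_p u=f$ (\cite{Li}, Theorem 1 or \cite{Fan}, Theorem 1.2). Those boundary estimates do not need the gradient nondegeneracy near $\alpha,\beta$ that you invoke, so you can drop that extra, unproved claim.
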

\begin{proof}
 In fact, in the hypotheses of the present theorem,  we showed in Theorem 1.3 in \cite{FL3} that,  if $u$ is a viscosity solution to \eqref{fbtrue} in the sense of Definition  \ref{defnhsol1}, then $F(u)$ is $C^{1,\gamma}$ in $B_{1/2}$. A careful analysis of the proofs in \cite{FL3} shows that Theorem 1.3 in \cite{FL3} holds even when $u$ is a viscosity solution in the sense of Definition \ref{defnhsol1bis} (see Remark  \ref{rmk9}).  

The $C^{1,\gamma}$ regularity of $u$ up to $F(u)$ is an immediate consequence (see,  \cite{Li}, Theorem 1 or \cite{Fan}, Theorem 1.2). This concludes the present proof.
\end{proof}
\begin{rem}\label{capitalremark}
We point out that the same arguments in the proof of Theorem \ref{main_new-nondeg-general} apply to all the theorems in \cite{FL3} as well. Namely,  stronger versions of all the theorems in \cite{FL3} hold.
\end{rem}

\begin{rem}
Let  $G(t,x)=(g(x)+t^{p})^{1/p}$ be such that  $g\in C^{0, \bar{\gamma}}(B_1),$ for some $0<\bar{\gamma}<1$, with $0< g_0\leq g(x)\leq g_1 $ for some constants $g_0, g_1$. Then Theorem \ref{main_new-nondeg-general} 
applies (see Remark 7.9 in \cite{FL3}). 
\end{rem}
\begin{rem}
Other interesting examples of functions $G$ satisfying assumptions (H1)-(H2)-(H3)-(H4) can be found in Section 7 of \cite{FL3}.
\end{rem}

\smallskip

We will next show that in our results in \cite{FL3} we can consider a more general class of functions $G$ giving the  free boundary condition, when we have $f\equiv 0$.
More precisely, we will next consider problem
\begin{equation}  \label{fbtruef0}
\left\{
\begin{array}{ll}
\Delta_{p} u = 0, & \hbox{in $\Omega^+(u)\cup \Omega^-(u)$}, \\
\  &  \\
u_\nu^+=G(u_\nu^-,x), & \hbox{on $F(u):= \partial \Omega^+(u) \cap
\Omega.$} 
\end{array}
\right.
\end{equation}

\smallskip

We will prove

\begin{thm}[Flatness implies $C^{1,\gamma}$] \label{main_new-nondeg-general-rhs0} Let $u$ be a viscosity solution to  \eqref{fbtruef0} in $B_1$ in the sense of Definition \ref{defnhsol1bis}. Let $0<\hat{\beta}<L$.  Assume $G$ satisfies  (H1)-(H2)-(H3') in $B_1$. There exists a universal
constant $\bar \ep>0$ such that, if \begin{equation*}\|u - U_{\beta}\|_{L^{\infty}(B_{1})} \leq \bar \eps\quad 
\text{for some }\, 0 <\hat{\beta}\leq \beta \leq L,\end{equation*} 
 and 
$$[G(\eta,\cdot)]_{C^{0,\bar \gamma}(B_1)} \leq \bar \ep
, \quad \text{for all }\, 0<\frac{\hat{\beta}}{2} \leq \eta \leq 4L,$$
then 
$$F(u) \text{ is } C^{1,\gamma} \text{ in } B_{1/2} \quad and \quad u\in C^{1,\gamma}(\overline{B^+_{1/2}(u)})\cap 
C^{1,\gamma}(\overline{B^-_{1/2}(u)}).$$ 
Here $\gamma$ is universal and the  $C^{1,\gamma}$ norm of $F(u)$ and of $u$ in $\overline{B^+_{1/2}(u)}$ and $\overline{B^-_{1/2}(u)}$ are bounded by a universal constant.

In the present theorem a constant depending only on $n$, $p$, $\hat\beta$, $L$,  $[G(\eta,\cdot)]_{C^{0,\bar\gamma}(B_1)}$, $\|G(\cdot, x)\|_{C^{1,\bar\gamma}([\frac{\hat{\beta}}{2},4L])}$, $\|G\|_{L^\infty((\frac{\hat{\beta}}{2},4L)\times B_1)}$, $\gamma_0$ and the constant $N$ in assumption (H3') is called universal.
\end{thm}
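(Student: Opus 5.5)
Since $f\equiv 0$, I will follow the proof of Theorem \ref{main_new-nondeg-general} (Theorem 1.3 in \cite{FL3}) and isolate where (H3)-(H4) are used. In \cite{FL3} the flatness-implies-$C^{1,\gamma}$ result comes from an improvement of flatness iteration. One needs a Harnack-type inequality for $\eps$-flat solutions, built from comparison subsolutions and supersolutions (Definition \ref{defsubcv}). One then needs compactness, linearization to a transmission problem, and regularity of that limiting problem. The structural assumptions (H3)-(H4) enter in two places. The first is the construction of explicit barriers, which are perturbations of two-plane solutions $U_\beta$ that must satisfy $\Delta_p v>f$ with $f\neq 0$. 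The second is the rescaling step. There, the blow-ups $u_r(x)=u(rx)/r$ solve problems with right-hand side $rf(rx)$ but with the same $G$. To keep the class invariant one must control how $G$ interacts with the change of slopes $\beta$ along the iteration.

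With $f\equiv 0$, the right-hand side stays zero under rescaling, so the barrier construction only needs $\Delta_p v\ge 0$ (resp. $\le 0$). This is obtained from $p$-harmonic-type radial profiles, and it holds for $w=a\varphi^+-b\varphi^-$ with any positive constants $a,b$ because the equation is homogeneous. The remaining role of (H3)-(H4) is in the free boundary inequality for the barrier. There one compares $G(b(1+c\eps),x_0)$ with $a(1+c\eps)$ after scaling both phases by a common factor. Here (H3') does the job: $\eta^{-N}G(\eta,x)$ decreasing gives $G(\lambda\eta,x)\le\lambda^N G(\eta,x)$ for $\lambda\ge1$. If we scale the positive phase by $\lambda^N$ and the negative phase by $\lambda$, the strict inequality in the free boundary condition is preserved. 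Since $p$-harmonicity is invariant under multiplying each phase by its own positive constant, this mismatch in scaling exponents is harmless when $f\equiv0$. So the plan is to redo the Harnack lemma with barriers of the form $v=\lambda^N\alpha\,\phi^+-\lambda\beta\,\phi^-$, where $\phi$ is a standard radial perturbation of $x_n$. The monotonicity in (H3') then replaces the convexity-type inequality (H4) that \cite{FL3} used to absorb the right-hand side.

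Next I will check the linearization step. Writing $u=U_\beta+\eps\tilde u$, the limit problem is the transmission problem $\mathcal{L}_\pm\tilde u=0$ on both sides with $\alpha\tilde u_n^+-\beta G_0'(\beta)\tilde u_n^-=0$ on $\{x_n=0\}$. Only (H1)-(H2) are needed for this, namely $G_0'(\beta)>0$, bounded and Hölder. The limiting problem's $C^{1,\gamma}$-type estimates are unchanged from \cite{FL3}. The improvement of flatness then proceeds as in \cite{FL3}, with the new slope $\beta'$ in a controlled range $[\hat\beta/2,4L]$ guaranteed by the hypothesis ranges in (H1)-(H2). The small Hölder seminorm of $G$ in $x$ is handled exactly as before. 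Finally, as in the proof of Theorem \ref{main_new-nondeg-general}, Definition \ref{defnhsol1bis} suffices by Remark \ref{rmk9}. The regularity of $u$ up to $F(u)$ follows from \cite{Li} or \cite{Fan}.

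The main obstacle I expect is the Harnack/barrier step. One must verify that the doubly rescaled barrier still touches correctly and that the factor $\lambda^N$ with $\lambda=1+c\eps$ only produces $O(\eps)$ errors compatible with flatness. One must also confirm that nothing else in \cite{FL3} uses (H3) or (H4), for instance in the nondegeneracy of the negative phase. If (H4) turns out to be used there too, I would try to recover that estimate from (H3') plus $G\ge\gamma_0$ and the homogeneity available when $f\equiv0$.
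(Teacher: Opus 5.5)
Your overall reduction matches the paper's: it proves this theorem by redoing Theorem 1.3 of \cite{FL3} with only the Harnack-type lemma (Lemma 7.4 there) replaced by Lemma \ref{mainvar}. Linearization, improvement of flatness, Remark \ref{rmk9} and \cite{Li}/\cite{Fan} are used unchanged. The gap is in how you propose to prove the new Harnack lemma.

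\textbf{The proposed barrier is not a strict subsolution.} Take your barrier $v=\lambda^N\alpha\phi^+-\lambda\beta\phi^-$ with constant $\lambda=1+c\eps$. On $\{\phi=0\}$ both phases inherit the same factor $k=|\nabla\phi|=1+\tilde k\delta\eps>1$ from the radial perturbation. This $k>1$ is unavoidable, since it is what bends the level set and produces the gain. You need $\lambda^N\alpha k>G(\lambda\beta k,x)$. But (H3') only gives $G_0(\lambda\beta k)\le(\lambda k)^NG_0(\beta)=\lambda^Nk^N\alpha$. The factor $\lambda^N$ cancels, and you would need $k>k^N$, which is false for $N>1$. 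The invariance you invoke preserves a strict free boundary inequality but cannot create one; what (H3') cannot absorb is exactly the common factor $k$. What works is boosting only the positive phase, $\alpha(1+c_1\eps)\phi^+-\beta\phi^-$, with $c_1$ large compared with the perturbation size $\delta$. The paper takes $\delta<c_1/(2^{N+1}N\tilde k)$, so that $(1+c_1\eps)kG_0(\beta)-k^NG_0(\beta)-\eps^2>0$.

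\textbf{The missing idea: an a priori slope gain.} The boosted barrier stays below $u$ on the outer sphere $\partial B_{4/5}(\bar x_0)$ only if you already know that $u$ grows with slope larger than $\alpha$ in the positive phase. Flatness gives only $u\ge\alpha(x_n+\sigma)$. The barrier's excess $\alpha c_1\eps(x_n+\sigma)$ is of order $c_1\eps\gg\delta\eps$, i.e.\ larger than the gain. So these $O(\eps)$ errors are \emph{not} compatible with flatness for free. Taking $c_1\ll\delta$ instead destroys the free boundary inequality.

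The paper resolves this tension with \eqref{c11}, namely $u-\alpha(x_n+\sigma)\ge\alpha c_1\eps(x_n+\sigma)$ in $\{x_n>-\sigma\}\cap\overline B_{19/20}$. It is obtained by propagating the interior gain \eqref{bound-around-barx0} to the whole positive side. Concretely, one compares $u-\alpha(x_n+\sigma)$ with $\frac12\alpha c_0\eps h$ in $R=(B_1\cap\{x_n>-\sigma\})\setminus\overline B_{r_3}(\bar x_0)$. Here $h$ is the $\mathcal M^-_{\lambda_0,\Lambda_0}$-harmonic function equal to $0$ on $\partial(B_1\cap\{x_n>-\sigma\})$ and $1$ on $\partial B_{r_3}(\bar x_0)$, and boundary Harnack gives $h\ge\hat c(x_n+\sigma)$.

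This is where $f\equiv0$ is really needed. It yields $\mathcal M^-_{\lambda_0,\Lambda_0}(D^2u)\le0$ via Lemma 6 of \cite{IS} (see \eqref{pucci-minus}), which removes the degenerate factor $|\nabla u|^{p-2}$ and makes a linear-type comparison available. Lemma \ref{mainvar-p=2} runs the same scheme for $p=2$, using a quadratic correction to handle $f\ne0$. The relevant features are not the ones you identify: invariance of the zero right hand side under rescaling, or the sign of $\Delta_p$ on the barrier. Without an estimate like \eqref{c11}, your Harnack step does not close.
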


In order to obtain Theorem \ref{main_new-nondeg-general-rhs0} we will need

\begin{lem}
\label{mainvar} Assume $G$ satisfies (H1)-(H2)-(H3') in $B_1$. There exists
a universal constant $\bar \ep>0$ such that if $u$ is a viscosity solution of $\eqref{fbtruef0}$  in the sense of Definition \ref{defnhsol1} and  satisfies
\begin{equation}\label{flat-var}
U_\beta(x_n+\sigma)\leq u(x)\leq U_\beta(x_n+\sigma+\varepsilon),\quad x\in B_1,  \ \ |\sigma|<\frac{1}{20},
\end{equation} 
for some $0 < \hat{\beta}\leq\beta \leq L$, with 
\begin{equation}  \label{noncvar}
\|G(\eta,x) - G_0(\eta)\|_{L^\infty(B_1)}\leq \varepsilon^2, \quad \text{ for all } \hat{\beta}\leq \eta \leq 2L,
\end{equation}
and in $\bar x=\frac{1}{10}e_n,$ 
$$
u(\bar x)\geq U_\beta(\bar x_n+\sigma+\frac{\varepsilon}{2}),
$$
for some $\ep\le\bar{\ep}$, then
\begin{equation}\label{improv-below}
 u(x) \ge U_\beta(x_n+\sigma+c{\varepsilon}) \quad \text{in } \overline{B}_{\frac{1}{2}},
\end{equation}
for some universal $0<c<1.$ Analogously, if 
$$
u(\bar x)\leq U_\beta(\bar x_n+\sigma+\frac{\varepsilon}{2}),
$$
then
\begin{equation}
 u(x) \le U_\beta(x_n+\sigma+(1-c){\varepsilon}) \quad \text{in } \overline{B}_{\frac{1}{2}}.
\end{equation}
\end{lem}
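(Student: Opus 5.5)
This is the standard Harnack-type improvement of De Silva, in the form used in \cite{FL3}. The plan is to rerun that proof with $f\equiv 0$ and to replace (H3)-(H4) by (H3') at the one point where the free boundary condition enters, namely the check that the barrier is a strict comparison subsolution. We treat the first alternative; the second is symmetric, using a supersolution barrier pushed from above.

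\textbf{Step 1: interior improvement.} Set $\bar x=\frac1{10}e_n$. Since $|\sigma|<\frac1{20}$, the ball $B_{1/20}(\bar x)$ lies in $\{x_n+\sigma>0\}$, so $u>0$ there and $\Delta_p u=0$. We also have $u\ge U_\beta(x_n+\sigma)=\alpha(x_n+\sigma)$, with $\alpha=G_0(\beta)$. The difference $w=u-\alpha(x_n+\sigma)\ge 0$ is controlled as follows. Because $u$ is $\varepsilon$-close to a linear function with slope $\alpha\ge\gamma_0>0$, and $p$-harmonic, interior $C^{1,\gamma}$ estimates keep $|\nabla u|$ close to $\alpha$ in a smaller ball. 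Hence $w$ solves a uniformly elliptic linear equation (the linearization of $\Delta_p$), or in any case lies in the Pucci class via \eqref{p(x)-vs-pucci} with gradient bounded away from $0$. The Harnack inequality, together with $w(\bar x)\ge \alpha\varepsilon/2$, then gives $w\ge c_0\varepsilon$ in $B_{1/40}(\bar x)$.

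\textbf{Step 2: barrier.} Take the radial function $\varphi(x)=|x-\bar x|^{-\gamma}-(3/4)^{-\gamma}$ on the annulus $A=B_{3/4}(\bar x)\setminus \overline B_{1/40}(\bar x)$, normalized and extended by $1$ inside the small ball. Define
\[
v_t(x)=U_\beta\bigl(x_n+\sigma+c_0\varepsilon\varphi(x)-\varepsilon+t\varepsilon\bigr),\qquad 0\le t\le 1 .
\]
Choosing $\gamma$ large makes $\varphi$ strictly $\mathcal M^-$-subharmonic. Since $|\nabla v_t|$ stays near $\alpha$ or $\beta\ge\hat\beta$ away from the free boundary, \eqref{p(x)-vs-pucci} gives $\Delta_p v_t>0$ in $A^\pm(v_t)$. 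At $t=0$ we have $v_0\le u$, and by Step 1 $v_t\le u$ on $\overline B_{1/40}(\bar x)$ and on $\partial B_{3/4}(\bar x)$ for all $t$. Slide $t$ up to $1$. If a first touching point exists, it cannot be in the positive or negative phase (comparison principle for $\Delta_p$, valid since the gradients are nonzero). It cannot be on $F(v_t)$ either, once we check that $v_t$ is a strict comparison subsolution there (Remark \ref{comp-not-touch}). At $t=1$ we obtain $u\ge U_\beta(x_n+\sigma+c_0\varepsilon\varphi)\ge U_\beta(x_n+\sigma+c\varepsilon)$ on $\overline B_{1/2}$, because $\varphi\ge c'>0$ there.

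\textbf{Step 3: free boundary condition (main obstacle).} On $F(v_t)$ the normal derivatives are
\[
v_\nu^+=\alpha|e_n+c_0\varepsilon\nabla\varphi|,\qquad v_\nu^-=\beta|e_n+c_0\varepsilon\nabla\varphi|,
\]
so, writing $s=|e_n+c_0\varepsilon\nabla\varphi|$, we need $\alpha s>G(\beta s,x)$ at every point. The radial structure gives $\partial_n\varphi\ge c_1>0$ on the relevant portion of the annulus, since the free boundary sits near $\{x_n=-\sigma\}$, below $\bar x$. Hence $s\ge 1+c_2\varepsilon$. This is where (H3') replaces (H4): $\eta^{-N}G$ decreasing gives
\[
G(\beta s,x)\le s^{N}G(\beta,x)\le s^{N}\bigl(G_0(\beta)+\varepsilon^2\bigr),
\]
so it suffices to have $\alpha s>s^N(\alpha+\varepsilon^2)$. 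With $N>1$ (replacing $N$ by a larger value is harmless, and we also need $\beta s\le 2L$) this inequality fails. So we will instead use (H3') in the reverse direction, choosing the barrier with $s=1-c_2\varepsilon<1$, i.e.\ $\nabla\varphi\cdot e_n<0$ near the free boundary, arranged by building $\varphi$ so that the distortion decreases $|\nabla|$ (for instance a concave perturbation in the $x_n$ direction). Then
\[
G(\beta s,x)\le s^{N}\bigl(G_0(\beta)+\varepsilon^2\bigr)\le \alpha s\bigl(1-c\varepsilon\bigr)+\varepsilon^2<\alpha s
\]
for $\varepsilon\le\bar\varepsilon$, where we use $N>1$ and $\alpha\ge\gamma_0$. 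Designing $\varphi$ so that it is subharmonic, has a positive lower bound on $B_{1/2}$, and reduces $|\nabla(x_n+c_0\varepsilon\varphi)|$ on the free boundary is the delicate point. I expect it to require splitting off a small tangential correction, chosen so that the $O(\varepsilon^2)$ error coming from \eqref{noncvar} is absorbed.
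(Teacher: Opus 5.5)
Your Steps 1 and 2 are sound, and in Step 3 you correctly observe that (H3') alone cannot make a gradient-enlarging barrier ($s>1$) a strict subsolution when $N>1$. The repair you propose fails, for two independent reasons.

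First, (H3') is applied in the wrong direction. For $s<1$, monotonicity of $\eta^{-N}G$ gives $(\beta s)^{-N}G(\beta s,x)\ge\beta^{-N}G(\beta,x)$, that is $G(\beta s,x)\ge s^NG(\beta,x)$. This is a lower bound, not the upper bound you wrote. The only upper bound available is $G(\beta s,x)\le G(\beta,x)$. With it, $\alpha s>G(\beta s,x)$ reduces, to first order in $1-s$, to $\beta G_0'(\beta)>G_0(\beta)$. This is false whenever $G_0(t)/t$ is decreasing, for instance for $(g+t^p)^{1/p}$ or under (P4).

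Second, no such $\varphi$ exists. In $\{v_t<0\}\cap A$, the condition $\Delta_p v_t>0$ means $\varphi$ satisfies a uniformly elliptic inequality $L\varphi>0$ with no zero-order term. You need $\varphi\le 0$ on $\partial B_{3/4}(\bar x)$, since only $u\ge U_\beta(x_n+\sigma)$ is known there. Also, $s<1$ forces $\partial_n\varphi<0$ on $F(v_t)$, so $\varphi$ increases as one enters $\{v_t<0\}$ from $F(v_t)$. The maximum principle then puts the maximum of $\varphi$ over the closure of $\{v_t<0\}\cap A$ on the outer sphere, so $\varphi\le 0$ on $F(v_t)$. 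But $F(v_t)$ crosses $\overline B_{1/2}$, where you need $\varphi\ge c'>0$. No tangential correction avoids this: any admissible barrier must enlarge the gradient on the free boundary.

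The missing ingredient is a step that converts the interior gain into a gain on the slope of the positive phase. That gain is the free parameter that beats the factor $k^{N-1}$. Since $f\equiv 0$, $u$ satisfies $\mathcal M^-_{\lambda_0,\Lambda_0}(D^2u)\le 0$ in $B_1\cap\{x_n>-\sigma\}$. The paper compares $u-\alpha(x_n+\sigma)$ with $\tfrac12\alpha c_0\varepsilon\phi$. Here $\phi$ is $\mathcal M^-$-harmonic in $(B_1\cap\{x_n>-\sigma\})\setminus\overline B_{r_3}(\bar x_0)$ and equals $1$ on the small sphere near $\bar x$ where the interior gain holds. Boundary Harnack gives $\phi\ge\hat c(x_n+\sigma)$, hence $u\ge\alpha(1+c_1\varepsilon)(x_n+\sigma)$ in $\{x_n>-\sigma\}\cap\overline B_{19/20}$.

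The paper then slides $v_t=\alpha(1+c_1\varepsilon)\Phi_t^+-\beta\Phi_t^-$, with $\Phi_t=x_n+\sigma-\tfrac{\varepsilon}{2}c_0\delta\psi+t\varepsilon$ and $\delta$ small. The gradient is still enlarged, $k=1+\tilde k\delta\varepsilon$, so (H3') is used in its correct direction, $G_0(\beta k)\le k^NG_0(\beta)$. This gives
\[
\alpha(1+c_1\varepsilon)k-G(\beta k,x)\ge\varepsilon G_0(\beta)\bigl(\tfrac{c_1}{2}-2^NN\tilde k\delta\bigr)>0 .
\]
The loss $k^N-k=O(\delta\varepsilon)$ is dominated by the gain $c_1\varepsilon$, which does not depend on $\delta$. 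The same slope gain is what keeps $v_t$ below $u$ on the outer sphere in the positive phase.
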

\begin{proof}We will argue as in the proof of Lemma 7.4 in \cite{FL3} and we will also apply arguments similar to those in Lemma 8.1 in \cite{DFS1}. 

We prove the first statement and for notational simplicity we drop the sub-index $\beta$ from $U_\beta$.

We start by observing that  we have
\begin{equation*}
|\nabla u|^{p-2}\mathcal{M}_{\lambda_0,\Lambda_0}^-(D^2u) 
\leq \Delta_{p}u = 0, \quad \text{ in }B_1^+(u)\cup B_1^-(u),
\end{equation*}
in the viscosity sense, where $\lambda_0:=\min\{1,p-1\}$ and $\Lambda_0:=\max\{1,p-1\}.$ 
Hence, applying Lemma 6 in \cite{IS}, we conclude that 
\begin{equation}\label{pucci-minus}
\mathcal{M}^-_{\lambda_0,\Lambda_0}(D^2u)\leq 0, \quad \text{ in }B_1^+(u)\cup B_1^-(u),
\end{equation}
in the viscosity sense.

{}From \eqref{flat-var} we have that $u(x)\ge U(x_n+\sigma)$ in $B_1$ and  that $B_{1/20}(\bar{x})\subset B_1^+(u).$ Then, 
\begin{equation*}
\Delta_{p}u=0\quad \text{ in } B_{1/20}(\bar{x}).
\end{equation*}
 Thus,  $u\in C^{1,{\widetilde{\gamma}}}$ in $\overline{B}_{1/40}(\bar{x}),$  where ${\widetilde{\gamma}}\in (0,1)$ and
$||u||_{C^{1,{\widetilde{\gamma}}}(\overline{B}_{1/40}(\bar{x}))}\leq C,$ with $C\geq 1$. Here ${\widetilde{\gamma}}$ and $C$ are universal constants depending
only on $p,n,L$ and $G_0(L)$. We have used  that \eqref{flat-var} implies that 
$||u||_{L^{\infty}(B_1)}\le 2\max\{L, G_0(L)\}$.

We consider two cases:

\medskip

{\bf Case (i).} Suppose $|\nabla u(\bar{x})|<\frac{\alpha}{4}$. As in Lemma 7.4 in \cite{FL3} we denote $q(x)=\alpha(x_n+ \sigma)$ and 
 obtain  
\begin{equation}\label{bound-around-barx0}
\alpha\frac{c_0}{2}\varepsilon\leq u(x)-q(x) \quad \text{ in }B_{r_3}(\bar{x}_0),
\end{equation}
with $\bar{x}_0:=\bar{x}-r_2e_n$. The constants $c_0, r_2, r_3$ are universal,  depending only on
$p,n,L, G_0(L)$ and $\gamma_0$.

{}From \eqref{bound-around-barx0} we will deduce that for $c_1>0$ small universal
\begin{equation}\label{c11}u -\alpha(x_n+ \sigma) \geq \alpha c_1 \eps (x_n+ \sigma), \quad x \in \{x_n>-\sigma\} \cap \overline B_{19/20}.\ee

To prove this claim, let $\phi$ solve
$$\mathcal{M}_{\lambda_0,\Lambda_0}^-(D^2 \phi) = 0 \quad \text{in $R:= (B_1 \cap \{x_n >-\sigma\})\setminus \overline B_{r_3}(\bar{x}_0)$}$$ with boundary data
$$\phi=0 \quad \text{on $\p (B_1 \cap \{x_n >-\sigma\})$}, \quad \phi=1 \quad \text{on $\p  B_{r_3}(\bar{x}_0).$}$$ 
Then, by boundary Harnack (see, for instance Proposition 2.5 in \cite{SS})
$$\phi \geq \hat{c} (x_n+ \sigma) \quad \text{in $\bar R \cap B_{19/20},$}$$
with $\hat{c}$ universal.

Now we observe that \eqref{flat-var} and \eqref{pucci-minus} imply that
$$\mathcal{M}^-_{\lambda_0,\Lambda_0}(D^2u)\leq 0\quad\text{ in }\quad B_1\cap\{x_n >-\sigma\}.$$

As a consequence
$$\mathcal{M}^-_{\lambda_0,\Lambda_0}(D^2(u-\alpha(x_n+ \sigma) ))\leq 0=\mathcal{M}^-_{\lambda_0,\Lambda_0}(D^2(\frac{1}{2}\alpha c_0\phi \varepsilon))\quad\text{ in } R.$$
So, recalling \eqref{bound-around-barx0} and using \eqref{flat-var} again, we get
$$u-\alpha(x_n+ \sigma)\ge \frac{1}{2}\alpha c_0\phi \varepsilon \quad\text{ in } R.$$
Hence \eqref{c11} follows.

Let us  define the function $w:\bar{A}\to\mathbb{R},$ $A:=B_{\frac{4}{5}}(\bar{x}_0)\setminus \bar{B}_{r_3}(\bar{x}_0)$ as
$$
w(x)=\bar{c}\left(|x-\bar{x}_0|^{-\gamma}-(\frac{4}{5})^{-\gamma}\right),
$$
for $\gamma=\gamma(n,p)\ge 1$ given in Theorem 4.4 in \cite{FL3}. We choose $\bar{c}>0$  in such a way that
$$
w=\left\{\begin{array}{l}
0,\quad \mbox{on}\quad \partial B_{\frac{4}{5}}(\bar{x}_0)\\
1,\quad \mbox{on}\quad \partial B_{r_3}(\bar{x}_0)
\end{array}
\right.
$$
and we extend $w$ to $1$ in $B_{r_3}(\bar{x}_0)$.

Now set $\psi =1-w$ and,  for $x \in
\overline B_{4/5}(\bar{x}_0)
$ and $t\in \R$, define
\begin{equation*}
v_t(x)= \alpha(1+c_1 \eps)(x_n +\sigma - \frac{\varepsilon}{2} c_0 \delta \psi(x)+t\varepsilon)^+ -  \beta(x_n +\sigma- \frac{\varepsilon}{2} c_0 \delta \psi(x)+t\varepsilon)^-, \end{equation*} with $\delta>0$ small universal to be made precise later, and $c_1$ the constant in \eqref{c11}.

Then, for $t=-c_1$ one can  verify that $$v_{-c_1}(x) \leq U(x_n+\sigma) \leq u (x), \quad  x \in
\overline B_{\frac{4}{5}}(\bar{x}_0).$$

Let $\bar t$ be the largest $t \geq -c_1$ such that
\begin{equation*}
v_{t}(x) \leq u(x) \quad \text{in $\overline B_{\frac{4}{5}}(\bar{x}_0)$},
\end{equation*}
and let $\tilde x$ be the first touching point.

We want to show that $\bar t \geq \frac{c_0}{2}\delta.$ Then, arguing as in \cite{FL3}, we will get \eqref{improv-below} for a universal
constant $0<c<1$ depending only on $p,n,L, G_0(L), \delta$ and $\gamma_0$.

Now suppose $\bar t <\frac{c_0}{2}\delta.$  To guarantee that $\tilde x$ cannot belong to $\p  B_{\frac{4}{5}}(\bar{x}_0)$ when 
$\bar t < \frac{c_0}{2}\delta$ we use \eqref{c11}. Indeed if $x \in \p B_{\frac{4}{5}}(\bar{x}_0) $ and $v_{\bar t}(x)\geq 0$ then, in view of \eqref{c11},
$$v_{\bar t}(x)= \alpha(1+c_1 \eps)(x_n +\sigma-\eps \frac{c_0}{2}\delta + \bar t \eps) <  \alpha(1+c_1 \eps)(x_n +\sigma)\leq u(x).$$ If $v_{\bar t}(x) <0$ we use that $u(x) \geq U(x_n+\sigma)$ to reach again the conclusion that $v_{\bar t}(x) < u(x)$.

We will now show that $\tilde x$ cannot belong to the annulus $A$. Indeed, we will get a contradiction, if we show that $v_{\bar{t}}$ is a strict
subsolution to $\eqref{fbtruef0}$ in $A$. 

In fact,  we obtain from Theorem 4.4 in \cite{FL3} that 
\begin{equation*}
\Delta_{p}v_{\bar t} > 0 \quad \text{ in } A^+(v_{\bar t})\cup A^-(v_{\bar t})
\end{equation*}
if $\varepsilon\le\ep_1$, with $\ep_1=\ep_1(n,p, L, G_0(L), \gamma_0)$.  Also, as in Lemma 4.5 in \cite{FL3} (see (4.18)),
\begin{equation*}
-\tilde{c}_1\le\psi_n \le-\tilde{c}_2 <0 \quad \text{on $F(v_{\bar t}) \cap A$},
\end{equation*}
with $\tilde{c}_1$ and $\tilde{c}_2$  universal constants, for $\ep\le \ep_2$, with $\ep_2$ universal. Then 
we have
\begin{equation*}
k \equiv|e_n-\varepsilon \frac{c_0}{2}\delta\nabla\psi|=(1-\varepsilon c_0\delta 
\psi_n+\varepsilon^2\frac{c_0^2}{4}\delta^2|\nabla\psi|^2)^{1/2} = 1+ \tilde k  \delta\eps,
\end{equation*} 
where $0<c_1\leq\tilde{k}\leq c_2$, with $c_1,c_2$ universal  constants and moreover,  
\begin{equation}\label{controlofkappa}1< k \leq 2,
\end{equation}
if $\varepsilon\leq \varepsilon_3$ universal. We will show that, on $F(v_{\bar t}) \cap A,$ using \eqref{noncvar},  we have
\begin{equation*}
(v_{\bar{t}}^{+})_{\nu }-G((v_{\bar{t}}^{-})_{\nu },x)>0,
\end{equation*}
as long as $\ep\le \ep_4$ universal. In fact, recalling  \eqref{noncvar} and \eqref{controlofkappa},
we get
\begin{align*}
(v_{\bar{t}}^{+})_{\nu }-G((v_{\bar{t}}^{-})_{\nu },x)&=
\alpha (1+c_1\varepsilon) k -G(\beta k ,x) \geq \alpha (1+c_1\varepsilon
)k -G_0(\beta k)-\varepsilon^2\\
&\ge (1+c_1 \varepsilon)k G_0(\beta)-G_0(\beta)k^N-\varepsilon^2
\\
&\geq \varepsilon G_0(\beta)(\frac{c_1}{2}-2^N N\tilde k \delta)> 0
\end{align*}
if $\delta< c_1/(2^{N+1}N\tilde k)$ and $\varepsilon\le \varepsilon_5$ universal.
We used that $G_0(\beta)\geq \gamma_0>0$ and that $G_0(\beta k)\le G_0(\beta)k^N$, since $\eta ^{-N}G_{0}(\eta )$ is  decreasing.

 Thus, $v_{\bar t}$ is a
strict subsolution to \eqref{fbtruef0} in $A$ as desired, which shows that $\tilde x$ cannot belong to the annulus $A$.

\smallskip

Therefore, $\tilde x \in \overline {B}_{r_3}(\bar{x}_0)$ and
$$u(\tilde x)=v_{\bar t}(\tilde x)  = \alpha (1+c_1\varepsilon)(\tilde x_n+\sigma + \bar t \ep) < \alpha (1+c_1\varepsilon)(\tilde x_n+\sigma + \frac{c_0}{2}\delta \ep).$$ 
This contradicts \eqref{bound-around-barx0}, if we choose $c_1<\frac{c_0}{4}$, $\delta<\frac{1}{4}$, and $\varepsilon\le \varepsilon_6$ universal.

\smallskip

{\bf Case (ii).} Now suppose $|\nabla u(\bar{x})|\geq\frac{\alpha}{4}.$ By exploiting  the $C^{1,{\widetilde{\gamma}}}$ regularity of $u$ in $\overline{B}_{\frac{1}{40}}(\bar{x})$, we know that $u$ is Lipschitz continuous in $\overline{B}_{\frac{1}{40}}(\bar{x})$, as well as there exists a  constant $0<r_0$, with $8r_0\le \frac{1}{40}$, and $C>1$, $r_0$ and $C$ depending only on $n, p,L, G_0(L)$ and $\gamma_0$
 such that 
$$
\frac{\gamma_0}{8}\leq |\nabla u|\leq C \quad \text{ in } B_{8r_0}(\bar{x}).
$$ 
We now combine the argument in Case (ii) of Lemma 4.5 in \cite{FL3} with the ones above. This completes the proof.
\end{proof}

We now obtain

\begin{proof}[\bf Proof of Theorem \ref{main_new-nondeg-general-rhs0}]  The proof follows as the one of Theorem 1.3 in \cite{FL3}, if we replace Lemma 7.4 in \cite{FL3} by Lemma \ref{mainvar} above.
\end{proof}

\medskip

We also  show that in our results in \cite{FL3} we can  consider a more general class of functions $G$ giving the free boundary condition, when we have 
$p=2$.
More precisely, we will next consider problem
\begin{equation}  \label{fbtruep=2}
\left\{
\begin{array}{ll}
\Delta u = f, & \hbox{in $\Omega^+(u)\cup \Omega^-(u)$}, \\
\  &  \\
u_\nu^+=G(u_\nu^-,x), & \hbox{on $F(u):= \partial \Omega^+(u) \cap
\Omega.$} 
\end{array}
\right.
\end{equation}

\smallskip

We will show

\begin{thm}[Flatness implies $C^{1,\gamma}$] \label{main_new-nondeg-general-p=2} Let $u$ be a viscosity solution to  \eqref{fbtruep=2} in $B_1$ in the sense of Definition \ref{defnhsol1bis}. Let $0<\hat{\beta}<L$.  Assume $G$ satisfies  (H1)-(H2)-(H3') in $B_1$. There exists a universal
constant $\bar \ep>0$ such that, if \begin{equation*}\|u - U_{\beta}\|_{L^{\infty}(B_{1})} \leq \bar \eps\quad 
\text{for some }\, 0 <\hat{\beta}\leq \beta \leq L,\end{equation*} 
 and 
$$ \|f\|_{L^\infty(B_1)} \leq \bar \ep,$$
$$[G(\eta,\cdot)]_{C^{0,\bar \gamma}(B_1)} \leq \bar \ep
, \quad \text{for all }\, 0<\frac{\hat{\beta}}{2} \leq \eta \leq 4L,$$
then 
$$F(u) \text{ is } C^{1,\gamma} \text{ in } B_{1/2} \quad and \quad u\in C^{1,\gamma}(\overline{B^+_{1/2}(u)})\cap 
C^{1,\gamma}(\overline{B^-_{1/2}(u)}).$$ 
Here $\gamma$ is universal and the  $C^{1,\gamma}$ norm of $F(u)$ and of $u$ in $\overline{B^+_{1/2}(u)}$ and $\overline{B^-_{1/2}(u)}$ are bounded by a universal constant.

In the present theorem a constant depending only on $n$,  $\hat\beta$, $L$,  $[G(\eta,\cdot)]_{C^{0,\bar\gamma}(B_1)}$, $\|G(\cdot, x)\|_{C^{1,\bar\gamma}([\frac{\hat{\beta}}{2},4L])}$, $\|G\|_{L^\infty((\frac{\hat{\beta}}{2},4L)\times B_1)}$, $\gamma_0$ and the constant $N$ in assumption (H3') is called universal.
\end{thm}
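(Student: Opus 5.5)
The plan mirrors the proof of Theorem \ref{main_new-nondeg-general-rhs0}. We rerun the proof of Theorem 1.3 in \cite{FL3} specialized to $p=2$, replacing the Harnack-type lemma (Lemma 7.4 in \cite{FL3}) by a version in which (H3)-(H4) are replaced by (H3'), while the right hand side $f$ is kept. The point is that for $p=2$ the operator is linear, so the factor $|\nabla u|^{p-2}$ is identically one and the right hand side enters the interior barrier computations only additively, with size $\|f\|_{L^\infty}\le\bar\eps$. The improvement-of-flatness iteration, the compactness argument and the linearized (transmission) problem in \cite{FL3} do not use (H3)-(H4); those hypotheses enter only through the strict subsolution property of the free boundary condition for the barriers. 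Hence the whole proof goes through once the analogue of Lemma \ref{mainvar} is available.

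\emph{Step 1: the key lemma.} I state and prove the analogue of Lemma \ref{mainvar} for \eqref{fbtruep=2}. The hypotheses are the flatness \eqref{flat-var}, the smallness \eqref{noncvar}, and additionally $\|f\|_{L^\infty(B_1)}\le\varepsilon^2$ (the usual rescaled smallness in \cite{FL3}). The conclusion is the same one-sided improvement in $\overline B_{1/2}$. The proof splits into the same two cases.

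Case (i), $|\nabla u(\bar x)|<\alpha/4$. Interior $C^{1,\widetilde\gamma}$ estimates for $\Delta u=f$ in $B_{1/20}(\bar x)$ give \eqref{bound-around-barx0} exactly as in Lemma 7.4 of \cite{FL3}. To get \eqref{c11}, I compare $u-\alpha(x_n+\sigma)-\frac12\alpha c_0\varepsilon\phi$ with $\Delta\phi=0$. The error from $f$ is absorbed by subtracting $C\varepsilon^2(1-|x|^2)$, which costs only $O(\varepsilon^2)$ and therefore preserves \eqref{c11} with a smaller $c_1$. The barrier $v_t$ is built with $w=\bar c(|x-\bar x_0|^{-\gamma}-(4/5)^{-\gamma})$. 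For $\Delta$ one checks directly that $\Delta v_t\ge c\,\delta\varepsilon>\varepsilon^2\ge f$ away from $F(v_t)$ when $\gamma> n-2$ and $\varepsilon$ is small. The free boundary inequality is then verified verbatim as in Lemma \ref{mainvar}, using (H3') in the form $G_0(\beta k)\le k^NG_0(\beta)$ together with $k=1+\tilde k\delta\varepsilon$.

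Case (ii), $|\nabla u(\bar x)|\ge\alpha/4$. This case uses the argument of Case (ii) of Lemma 4.5 in \cite{FL3}, combined with the modified free boundary check.

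\emph{Step 2: iterate.} With the lemma in hand, the Harnack inequality and the improvement of flatness follow as in \cite{FL3}. The iteration rescales $u_r(x)=u(rx)/r$, which gives $f_r=rf$ and $[G(\eta,\cdot)]$ scaling by $r^{\bar\gamma}$, both decreasing, so the hypotheses are preserved. This yields $F(u)\in C^{1,\gamma}$ in $B_{1/2}$. As in the proof of Theorem \ref{main_new-nondeg-general}, passing from Definition \ref{defnhsol1} to Definition \ref{defnhsol1bis} uses Remark \ref{rmk9}. Regularity of $u$ up to $F(u)$ on each side follows from boundary Schauder/$C^{1,\gamma}$ estimates for the Poisson equation in $C^{1,\gamma}$ domains (\cite{Li}).

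The main obstacle I expect is Step 1, Case (i): one must keep the $f$-perturbation of size $\varepsilon^2$ subordinate to the gain $c_1\varepsilon$ from \eqref{c11} and to the positivity of $\Delta v_t$, and check that the choice of $\delta$ forced by (H3') remains compatible with these.
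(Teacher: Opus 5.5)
Your route is the paper's. It replaces Lemma 7.4 of \cite{FL3} by a $p=2$ analogue of Lemma \ref{mainvar}, namely Lemma \ref{mainvar-p=2}, then reruns Theorem 1.3 of \cite{FL3}. In that lemma (H3') enters only through $G_0(\beta k)\le k^N G_0(\beta)$ in the free boundary check, and $f$ is handled by a quadratic correction when proving \eqref{c11}. Your smallness $\|f\|_{L^\infty}\le\varepsilon^2$, versus the paper's $\varepsilon^2\min\{1,\beta,G_0(\beta)\}$, is immaterial, since $\beta\ge\hat\beta$ and $G_0(\beta)\ge\gamma_0$.

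\textbf{The gap: the correction in Case (i).} One step fails as written. Your correction $C\varepsilon^2(1-|x|^2)$ does not vanish on the flat part $\{x_n=-\sigma\}\cap B_1$ of $\partial R$. The comparison is valid, but it only gives
\[
u-\alpha(x_n+\sigma)\ \ge\ \tfrac12\alpha c_0\hat c\,\varepsilon(x_n+\sigma)-C\varepsilon^2 \quad\text{in } \overline R\cap B_{19/20}.
\]
In the strip $0<x_n+\sigma\lesssim\varepsilon$ the right-hand side is negative, while \eqref{c11} demands a positive lower bound there. An additive $\varepsilon^2$ error cannot be absorbed into a bound that is linear in $x_n+\sigma$ by shrinking $c_1$. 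So your claim that it ``preserves \eqref{c11} with a smaller $c_1$'' is false.

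\textbf{First repair.} Use a correction that vanishes on that hyperplane, for instance subtract $C\varepsilon^2 (x_n+\sigma)\bigl(2-(x_n+\sigma)\bigr)$.
\begin{itemize}
\item Its Laplacian is $-2C\varepsilon^2$, so the barrier still dominates $f$.
\item It is nonnegative on $\overline R$, where $0\le x_n+\sigma\le 21/20$, so the boundary comparison is unaffected.
\item It is at most $2C\varepsilon^2(x_n+\sigma)$, so it is genuinely absorbed by $c_1\varepsilon(x_n+\sigma)$.
\end{itemize}
This is the role of the quadratic term $-8\alpha\varepsilon^2x_n+4\alpha\varepsilon^2x_n^2$ in the paper's $\tilde\phi$, read in the variable $x_n+\sigma$.

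\textbf{Second repair.} Alternatively, keep your weaker bound and note where \eqref{c11} is actually used. It only rules out a first touching point on $\partial B_{4/5}(\bar x_0)$. There an extra loss of $C\varepsilon^2$ just means you prove $\bar t\ge \frac{c_0}{2}\delta - C'\varepsilon$, with $C'>C/\gamma_0$, instead of $\bar t\ge\frac{c_0}{2}\delta$. This still gives the improvement in $\overline B_{1/2}$, because $w\ge \bar c\bigl((3/5)^{-\gamma}-(4/5)^{-\gamma}\bigr)>0$ there.

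With either repair the rest of your plan goes through.
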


In order to obtain Theorem \ref{main_new-nondeg-general-p=2} we will need

\begin{lem}
\label{mainvar-p=2} Assume $G$ satisfies (H1)-(H2)-(H3') in $B_1$. There exists
a universal constant $\bar \ep>0$ such that if $u$ is a viscosity solution of $\eqref{fbtruep=2}$  in the sense of Definition \ref{defnhsol1} and  satisfies
\begin{equation*}
U_\beta(x_n+\sigma)\leq u(x)\leq U_\beta(x_n+\sigma+\varepsilon),\quad x\in B_1,  \ \ |\sigma|<\frac{1}{20},
\end{equation*} 
for some $0 < \hat{\beta}\leq\beta \leq L$, with 
\begin{equation}\label{assump-f-p=2}
\|f\|_{L^\infty(B_1)} \leq \ep^2\min\{ 1, \beta, G_0(\beta)\}, 
\end{equation}
\begin{equation*}  
\|G(\eta,x) - G_0(\eta)\|_{L^\infty(B_1)}\leq \varepsilon^2, \quad \text{ for all } \hat{\beta}\leq \eta \leq 2L,
\end{equation*}
and in $\bar x=\frac{1}{10}e_n,$ 
$$
u(\bar x)\geq U_\beta(\bar x_n+\sigma+\frac{\varepsilon}{2}),
$$
for some $\ep\le\bar{\ep}$, then
\begin{equation*}
 u(x) \ge U_\beta(x_n+\sigma+c{\varepsilon}) \quad \text{in } \overline{B}_{\frac{1}{2}},
\end{equation*}
for some universal $0<c<1.$ Analogously, if 
$$
u(\bar x)\leq U_\beta(\bar x_n+\sigma+\frac{\varepsilon}{2}),
$$
then
\begin{equation*}
 u(x) \le U_\beta(x_n+\sigma+(1-c){\varepsilon}) \quad \text{in } \overline{B}_{\frac{1}{2}}.
\end{equation*}
\end{lem}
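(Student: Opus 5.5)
We follow the structure of the proof of Lemma \ref{mainvar}, which in turn adapts Lemma 7.4 in \cite{FL3} and Lemma 8.1 in \cite{DFS1}. The only new ingredient is the right hand side $f$. Since $p=2$, the operator is the Laplacian and no degeneracy factor $|\nabla u|^{p-2}$ appears, so $f$ can be absorbed as a small perturbation. We prove the first statement; the second is symmetric, using a supersolution barrier touching from above. Since $\Delta u=f$ with $|f|\le \ep^2$ in $B_1^+(u)\cup B_1^-(u)$, we have in the viscosity sense $\Delta u \le \ep^2\min\{1,\beta,G_0(\beta)\}$ there. From \eqref{flat-var}, $B_{1/20}(\bar x)\subset B_1^+(u)$, so by interior estimates for the Poisson equation $u\in C^{1,\widetilde\gamma}(\overline B_{1/40}(\bar x))$ with universal bounds; here we use $\|u\|_{L^\infty(B_1)}\le 2\max\{L,G_0(L)\}$ and $\|f\|_\infty\le 1$.

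\textbf{Case (i):} $|\nabla u(\bar x)|<\alpha/4$. As in Lemma 7.4 of \cite{FL3}, we obtain \eqref{bound-around-barx0}, namely $u-q\ge \alpha\frac{c_0}{2}\ep$ in $B_{r_3}(\bar x_0)$. The Harnack-type argument there tolerates a right hand side of size $\ep^2\le \ep\cdot\bar\ep$. Next we prove the analogue of \eqref{c11}. Let $\phi$ be the harmonic function in $R$ with the same boundary data as before. Compare $u-\alpha(x_n+\sigma)$ with $\frac12\alpha c_0\ep\phi - C\ep^2\alpha^{-1}\cdots(1-|x|^2)$, a quadratic correction absorbing $f$. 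The smallness $\|f\|\le \ep^2\min\{1,\beta,G_0(\beta)\}$ makes this correction $O(\ep^2)$ relative to $\alpha\ep$. Then boundary Harnack gives $\phi\ge \hat c(x_n+\sigma)$, and we still conclude $u-\alpha(x_n+\sigma)\ge \alpha c_1\ep(x_n+\sigma)$ in $\{x_n>-\sigma\}\cap \overline B_{19/20}$, possibly after shrinking $c_1$. The point to check is that near $\{x_n=-\sigma\}$ the $\ep^2$ correction is dominated. Since both $\phi$ and the correction vanish on the flat boundary, we choose the correction to be a multiple of $\ep^2(x_n+\sigma)$ times a bounded function, obtained by solving $\Delta h=-1$ in $B_1\cap\{x_n>-\sigma\}$ with zero data. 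Its normal derivative is bounded, so $|h|\le C(x_n+\sigma)$, and it is absorbed once $\ep\le\bar\ep$.

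Then we build the same family $v_t$, with $\psi=1-w$, $w=\bar c(|x-\bar x_0|^{-\gamma}-(4/5)^{-\gamma})$, $\gamma$ large. The key step is strict subsolution in the annulus $A$. For $p=2$, $\Delta v_t$ on the positive side equals $-\alpha(1+c_1\ep)\frac{\ep}{2}c_0\delta\Delta\psi = \alpha(1+c_1\ep)\frac{\ep}{2}c_0\delta\Delta w$. Since $\Delta w\ge c(\gamma,n)>0$ on $A$ for $\gamma\ge n-1$, this is $\ge c\,\alpha\delta\ep$. It is strictly larger than $\|f\|_\infty\le \ep^2 G_0(\beta)=\ep^2\alpha$ once $\ep$ is small relative to $\delta$. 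On the negative side we get $\ge c\beta\delta\ep>\ep^2\beta$, which is exactly why the factor $\min\{\beta,G_0(\beta)\}$ appears in \eqref{assump-f-p=2}. The free boundary condition is verified verbatim as in Lemma \ref{mainvar}, using (H3') via $G_0(\beta k)\le k^N G_0(\beta)$ and $\|G-G_0\|\le\ep^2$. Touching on $\partial B_{4/5}(\bar x_0)$ is excluded by the modified \eqref{c11}, and touching in $\overline B_{r_3}(\bar x_0)$ contradicts \eqref{bound-around-barx0}. The conclusion then follows as in \cite{FL3}.

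\textbf{Case (ii):} $|\nabla u(\bar x)|\ge\alpha/4$. Using the $C^{1,\widetilde\gamma}$ bound, $\gamma_0/8\le|\nabla u|\le C$ in $B_{8r_0}(\bar x)$, and we proceed as in Case (ii) of Lemma 4.5 of \cite{FL3}. With $p=2$ this is simpler: the operator is linear, so we apply Harnack to $u-q$, which satisfies $|\Delta(u-q)|\le\ep^2$, and then run the same barrier argument. The main obstacle we expect is the uniform control of the $f$-perturbation in the boundary Harnack step near the flat boundary. We handle it with the auxiliary function $h$ described above.
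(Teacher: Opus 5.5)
Your proposal is correct and follows the paper's proof. The one new step is absorbing $f$ when deriving \eqref{c11}. The paper does this with the explicit quadratic correction $\tilde\phi=\frac12\alpha c_0\phi\eps-8\alpha\eps^2x_n+4\alpha\eps^2x_n^2$ where you use the torsion function $h$. That polynomial should be read with $x_n+\sigma$ in place of $x_n$, so that, like your $h$, it vanishes on $\{x_n=-\sigma\}$ and is bounded in absolute value by $C\alpha\eps^2(x_n+\sigma)$. Your explicit check that $\Delta v_{\bar t}\ge c\,\delta\eps\min\{\alpha,\beta\}>\|f\|_{L^\infty(B_1)}$ in $A$ is what the paper leaves implicit in ``follows exactly as that of Lemma \ref{mainvar}''.
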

\begin{proof}We will argue as in the proof of Lemma \ref{mainvar}.
In the present case we have
\begin{equation*}
\Delta u = f, \quad \text{ in }B_1^+(u)\cup B_1^-(u).
\end{equation*}
We consider two cases:

\medskip

{\bf Case (i).} Suppose $|\nabla u(\bar{x})|<\frac{\alpha}{4}$. As in Lemma \ref{mainvar} we get that \eqref{bound-around-barx0} holds. We now
let $\phi$ solve
$$\Delta \phi = 0 \quad \text{in $R:= (B_1 \cap \{x_n >-\sigma\})\setminus \overline B_{r_3}(\bar{x}_0)$}$$ with boundary data
$$\phi=0 \quad \text{on $\p (B_1 \cap \{x_n >-\sigma\})$}, \quad \phi=1 \quad \text{on $\p  B_{r_3}(\bar{x}_0).$}$$ 
Then, by boundary Harnack (see, for instance Proposition 2.5 in \cite{SS})
$$\phi \geq \hat{c} (x_n+ \sigma) \quad \text{in $\bar R \cap B_{19/20},$}$$
with $\hat{c}$ universal.

Now let $\tilde\phi =\frac {1}{2}\alpha c_0 \phi \eps - 8\alpha \eps^2 x_n + 4 \alpha \ep^2 x_n^2.$

Using \eqref{assump-f-p=2}, we get
$$u-\alpha(x_n+ \sigma)\ge \tilde\phi \quad\text{ in } R,$$
which implies \eqref{c11}. Now the proof follows exactly as that of Lemma \ref{mainvar}.
\end{proof}

\medskip

We then obtain

\begin{proof}[\bf Proof of Theorem \ref{main_new-nondeg-general-p=2}]  The proof follows as the one of Theorem 1.3 in \cite{FL3}, if we replace Lemma 7.4 in \cite{FL3} by Lemma \ref{mainvar-p=2} above.
\end{proof}

\medskip

We conclude the proofs in this section  by proving that our results in \cite{FL3} can be obtained under a different set of assumptions in the particular case in which the function giving the free boundary condition satisfies that $G(t,x)\equiv G(t)$.
More precisely, we will next consider problem
\begin{equation}  \label{fbtrueG(t)}
\left\{
\begin{array}{ll}
\Delta_p u = f, & \hbox{in $\Omega^+(u)\cup \Omega^-(u)$}, \\
\  &  \\
u_\nu^+=G(u_\nu^-), & \hbox{on $F(u):= \partial \Omega^+(u) \cap
\Omega.$} 
\end{array}
\right.
\end{equation}

\smallskip

We are in a position of obtaining

\begin{thm}[Flatness implies $C^{1,\gamma}$] \label{main_new-nondeg-general-G(t)} Let $u$ be a viscosity solution to  \eqref{fbtrueG(t)} in $B_1$ in the sense of Definition \ref{defnhsol1bis}. Let $0<\hat{\beta}<L$.  Assume $G$ satisfies  (H1)-(H2)-(H3'') in $B_1$. There exists a universal
constant $\bar \ep>0$ such that, if \begin{equation*}\|u - U_{\beta}\|_{L^{\infty}(B_{1})} \leq \bar \eps\quad 
\text{for some }\, 0 <\hat{\beta}\leq \beta \leq L,\end{equation*} 
 and 
$$ \|f\|_{L^\infty(B_1)} \leq \bar \ep,$$then 
$$F(u) \text{ is } C^{1,\gamma} \text{ in } B_{1/2} \quad and \quad u\in C^{1,\gamma}(\overline{B^+_{1/2}(u)})\cap 
C^{1,\gamma}(\overline{B^-_{1/2}(u)}).$$ 
Here $\gamma$ is universal and the  $C^{1,\gamma}$ norm of $F(u)$ and of $u$ in $\overline{B^+_{1/2}(u)}$ and $\overline{B^-_{1/2}(u)}$ are bounded by a universal constant.

In the present theorem a constant depending only on $n$, $p$, $\hat\beta$, $L$,   $\|G\|_{C^{1,\bar\gamma}([\frac{\hat{\beta}}{2},4L])}$, $\|G\|_{L^\infty((\frac{\hat{\beta}}{2},4L))}$ and $\gamma_0$  is called universal.
\end{thm}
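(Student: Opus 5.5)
Following the pattern of Theorems \ref{main_new-nondeg-general-rhs0} and \ref{main_new-nondeg-general-p=2}, we would prove Theorem \ref{main_new-nondeg-general-G(t)} by rerunning the proof of Theorem 1.3 in \cite{FL3}. The only change is that Lemma 7.4 in \cite{FL3} is replaced by a new Harnack-type lemma (an analogue of Lemma \ref{mainvar}) for solutions of \eqref{fbtrueG(t)} under (H1)-(H2)-(H3''). Since $G$ does not depend on $x$, the condition \eqref{noncvar} holds trivially, and no H\"older assumption on $G(\eta,\cdot)$ is needed. The passage from Definition \ref{defnhsol1} to Definition \ref{defnhsol1bis}, and the $C^{1,\gamma}$ regularity of $u$ up to $F(u)$, are then handled exactly as in the proof of Theorem \ref{main_new-nondeg-general}, via Remark \ref{rmk9} and \cite{Li}, \cite{Fan}.

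The new lemma would assume the flatness \eqref{flat-var} and $\|f\|_{L^\infty(B_1)}\le \ep^2\min\{1,\beta,G(\beta)\}$, and conclude \eqref{improv-below}. The proof splits into the same two cases at $\bar x$.

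In Case (ii), where $|\nabla u(\bar x)|\ge \alpha/4$, the gradient is bounded away from zero near $\bar x$. The equation is therefore uniformly elliptic there, and we can use the argument of Case (ii) of Lemma 4.5 in \cite{FL3}, which already allows $f\ne0$.

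In Case (i) we first obtain \eqref{bound-around-barx0} exactly as in \cite{FL3}. We then derive the linear lower bound \eqref{c11} by the barrier argument, now with $f\ne 0$. In $\{x_n>-\sigma\}$ the Pucci inequality of Lemma \ref{mainvar} only holds up to an error coming from $f$. Because $|\nabla u|$ may degenerate, we would work instead in the region $R$ with the weak/viscosity comparison for $\Delta_p$ directly. Concretely, we compare $u$ with $q+\tilde\phi$, where $q=\alpha(x_n+\sigma)$ and $\tilde\phi$ is the corrected barrier of Lemma \ref{mainvar-p=2}. Since $\nabla(q+\tilde\phi)$ is close to $\alpha e_n\neq0$, the $p$-Laplacian of this function is a uniformly elliptic perturbation of the Laplacian applied to $\tilde\phi$. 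The $\ep^2 x_n^2$ term then absorbs $f$, giving $\Delta_p(q+\tilde\phi)>f$ in $R$, and comparison yields \eqref{c11}.

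Next we slide the family $v_t$ as in Lemma \ref{mainvar}. Theorem 4.4 in \cite{FL3} gives $\Delta_p v_{\bar t}>f$ in $A^\pm(v_{\bar t})$ for $\ep$ small, since that result was designed for small right-hand sides. The new point is the free boundary inequality. Here $k=1+\tilde k\delta\ep$ and $(v_{\bar t}^+)_\nu-G((v_{\bar t}^-)_\nu)=\alpha(1+c_1\ep)k-G(\beta k)$. Since $\eta^{-1}G(\eta)$ is strictly decreasing on $[\hat\beta,2L]$, we have $G(\beta k)< kG(\beta)=k\alpha$. Hence the difference is at least $c_1\ep k\alpha>0$, with no restriction on $\delta$ from this step. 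This is precisely why (H3'') replaces (H3)-(H4): the linear growth bound for $G$ comes directly from monotonicity of $G(\eta)/\eta$, and no $N$-dependent correction appears. The final contradiction at $\tilde x\in\overline B_{r_3}(\bar x_0)$ is identical to the one in Lemma \ref{mainvar}.

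The main obstacle is the proof of \eqref{c11} in Case (i) when $p\neq2$ and $f\neq0$. The reduction to $\mathcal M^-\le0$ via \cite{IS} used $f\equiv0$, and $|\nabla u|$ may degenerate near $\{x_n=-\sigma\}$. If comparing $u$ with $q+\tilde\phi$ directly fails, the fallback is to use the barrier $v_t$ itself, together with \eqref{bound-around-barx0}, to push $u$ above $q$ near the flat part. This is how Lemma 7.4 in \cite{FL3} treats the inhomogeneous case: there the linear lower bound was obtained through the structure of the sliding barrier rather than through boundary Harnack.
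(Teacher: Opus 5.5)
Your architecture matches the paper's, and your key inequality $G(\beta k)<kG(\beta)$ is exactly the one the paper uses. The paper reruns Theorem 1.3 of \cite{FL3} with Lemma 7.4 there replaced by Lemma \ref{mainvar-G(t)}. It then passes to Definition \ref{defnhsol1bis} and to regularity up to $F(u)$ as in Theorem \ref{main_new-nondeg-general}. The difference is that you import the amplification factor $(1+c_1\ep)$ from Lemma \ref{mainvar}, and with it the need for \eqref{c11}. The paper avoids both. It keeps the barrier of Lemma 7.4 in \cite{FL3} unchanged, with coefficient $\alpha=G(\beta)$ on the positive part, so that on $F(v_{\bar t})\cap A$
\[
(v_{\bar t}^+)_\nu-G((v_{\bar t}^-)_\nu)=G(\beta)k-G(\beta k)>0 .
\]
This holds because $k>1$ there, $\beta k\in(\hat\beta,2L]$, and $\eta^{-1}G(\eta)$ is strictly decreasing. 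Since $G$ does not depend on $x$, there is no $\ep^2$ error to beat. A pointwise strict inequality at the touching point is all that is needed to contradict Definition \ref{defnhsol1}.

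Everything else in Lemma 7.4 of \cite{FL3}, which already treats $f\neq0$, general $p$, and both cases at $\bar x$, is untouched. Without amplification, the barrier stays below $U_\beta(x_n+\sigma)\le u$ on the outer sphere, so no linear lower bound is involved. The factor $(1+c_1\ep)$ is needed in Lemma \ref{mainvar} only because (H3') gives merely $G_0(\beta k)\le k^N G_0(\beta)$, and the $x$-dependence adds an $\ep^2$ error. So your ``main obstacle'' is self-inflicted. Your description of the fallback is also inaccurate: Lemma 7.4 of \cite{FL3} does not extract a linear lower bound from the sliding barrier; it simply does not need one. Your Case (ii), which just invokes \cite{FL3}, implicitly uses the unamplified barrier anyway, so it is inconsistent with your Case (i).

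If you nevertheless keep your route, your proof of \eqref{c11} fails as written for $p\neq2$. With the harmonic $\phi$ of Lemma \ref{mainvar-p=2},
\[
\Delta_p(q+\tilde\phi)=|\nabla(q+\tilde\phi)|^{p-2}\big(\Delta\tilde\phi+(p-2)\langle D^2\tilde\phi\,\nu,\nu\rangle\big)
\]
contains the term $(p-2)\tfrac12\alpha c_0\ep\langle D^2\phi\,\nu,\nu\rangle$. It is of order $\ep$ and has no sign, so the $\ep^2$ quadratic correction cannot absorb it. The operator is uniformly elliptic near $\alpha e_n$, but it is not a small perturbation of the Laplacian.

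The repair is to take $\phi$ with $\mathcal M^-_{\lambda_0,\Lambda_0}(D^2\phi)=0$, as in Lemma \ref{mainvar}. This $\phi$ is interior $C^{2}$ by Evans--Krylov and has bounded gradient up to $\p R$, so $\Delta\phi+(p-2)\Delta^N_\infty\phi\ge0$ pointwise. You would also need a quadratic correction whose constant depends on $\lambda_0$ and which is nonpositive on $\p(B_1\cap\{x_n>-\sigma\})$. That works, but it is machinery the paper never needs.

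As a minor point, the natural smallness condition for $f$ here is $\ep^2\min\{1,\beta^{p-1},G(\beta)^{p-1}\}$, as in Lemma \ref{mainvar-G(t)}. Your version with $\min\{1,\beta,G(\beta)\}$ differs from it only by universal constants.
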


In order to obtain Theorem \ref{main_new-nondeg-general-G(t)} we will need

\begin{lem}
\label{mainvar-G(t)} Assume $G$ satisfies (H1)-(H2)-(H3'') in $B_1$. There exists
a universal constant $\bar \ep>0$ such that if $u$ is a viscosity solution of $\eqref{fbtrueG(t)}$  in the sense of Definition \ref{defnhsol1} and  satisfies
\begin{equation*}
U_\beta(x_n+\sigma)\leq u(x)\leq U_\beta(x_n+\sigma+\varepsilon),\quad x\in B_1,  \ \ |\sigma|<\frac{1}{20},
\end{equation*} 
for some $0 < \hat{\beta}\leq\beta \leq L$, with 
\begin{equation*}
\|f\|_{L^\infty(B_1)} \leq \ep^2\min\{ 1, \beta^{p-1}, G(\beta)^{p-1}\},\end{equation*}
and in $\bar x=\frac{1}{10}e_n,$ 
$$
u(\bar x)\geq U_\beta(\bar x_n+\sigma+\frac{\varepsilon}{2}),
$$
for some $\ep\le\bar{\ep}$, then
\begin{equation*}
 u(x) \ge U_\beta(x_n+\sigma+c{\varepsilon}) \quad \text{in } \overline{B}_{\frac{1}{2}},
\end{equation*}
for some universal $0<c<1.$ Analogously, if 
$$
u(\bar x)\leq U_\beta(\bar x_n+\sigma+\frac{\varepsilon}{2}),
$$
then
\begin{equation*}
 u(x) \le U_\beta(x_n+\sigma+(1-c){\varepsilon}) \quad \text{in } \overline{B}_{\frac{1}{2}}.
\end{equation*}
\end{lem}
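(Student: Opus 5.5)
We follow the scheme of Lemma \ref{mainvar} (itself modelled on Lemma 7.4 in \cite{FL3}). We prove only the first statement, since the second is symmetric, and we drop the subindex $\beta$.

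Since $B_{1/20}(\bar x)\subset B_1^+(u)$ and $\Delta_p u=f$ there with $\|f\|_{L^\infty}\le\ep^2$, interior $C^{1,\widetilde\gamma}$ estimates give universal bounds on $u$ in $\overline B_{1/40}(\bar x)$. We then split into the same two cases.

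\textbf{Case (ii):} $|\nabla u(\bar x)|\ge \alpha/4$. Here $u$ is uniformly nondegenerate in $B_{8r_0}(\bar x)$, so the equation is uniformly elliptic there. The argument of Case (ii) of Lemma 4.5 in \cite{FL3} then applies directly, with the right hand side absorbed through the smallness $\|f\|\le \ep^2\min\{1,\beta^{p-1},G(\beta)^{p-1}\}$.

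\textbf{Case (i):} $|\nabla u(\bar x)|<\alpha/4$. As in Lemma 7.4 of \cite{FL3} we obtain \eqref{bound-around-barx0}. The next step is to derive \eqref{c11}. Since $f\not\equiv 0$, we cannot use the homogeneous Pucci inequality \eqref{pucci-minus} as in Lemma \ref{mainvar}. Instead we argue in the region $R$ with a barrier adapted to the $p$-Laplacian: near the positive phase $u$ is close to $\alpha(x_n+\sigma)$, so its gradient is close to $\alpha e_n$. We take
\[
\tilde\phi=\tfrac12\alpha c_0\ep\,\phi - C\alpha\ep^2 x_n + C'\alpha\ep^2x_n^2,
\]
as in Lemma \ref{mainvar-p=2}, and compare $u$ with $\alpha(x_n+\sigma)+\tilde\phi$. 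A direct computation using \eqref{equation_nondivergence_form} shows that $\Delta_p(\alpha(x_n+\sigma)+\tilde\phi)\gtrsim \alpha^{p-1}\ep^2$ wherever the gradient stays comparable to $\alpha$. This beats $\|f\|\le\ep^2\alpha^{p-1}$ (recall $\alpha=G(\beta)$), provided $\phi$ is chosen as a smooth explicit radial-type barrier rather than a Pucci solution; the alternative is to rely on the quantitative construction of Theorem 4.4 in \cite{FL3}. Comparison for weak solutions then yields \eqref{c11}. The same smallness, $\|f\|\le\ep^2\beta^{p-1}$, handles the negative phase.

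Next we build $v_t$ exactly as in Lemma \ref{mainvar}. The interior inequality $\Delta_p v_{\bar t}>f$ in $A^\pm(v_{\bar t})$ comes from Theorem 4.4 in \cite{FL3}. That theorem gives $\Delta_p v\ge c\,\ep\,\alpha^{p-1}$ (respectively $c\,\ep\,\beta^{p-1}$) with a margin of order $\ep$, which dominates $\|f\|\le \ep^2\min\{\alpha^{p-1},\beta^{p-1}\}$ for $\ep$ small.

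The step we expect to be the main obstacle is the free boundary inequality, because (H3') is replaced by (H3''). With $k=1+\tilde k\delta\ep$ we need
\[
\alpha(1+c_1\ep)k-G(\beta k)>0 .
\]
Since $\eta^{-1}G(\eta)$ is decreasing, $G(\beta k)\le kG(\beta)=k\alpha$. Hence
\[
\alpha(1+c_1\ep)k-G(\beta k)\ge c_1\ep\alpha k>0,
\]
with no restriction on $\delta$ from this step and no error term, since $G$ is independent of $x$. This replaces the $k^N$ estimate. Strict monotonicity is only needed for the analogous supersolution step in the second statement, where $G(\beta k')\ge k'G(\beta)$ for $k'<1$.

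The remaining arguments are unchanged. The touching point cannot lie on $\partial B_{4/5}(\bar x_0)$, by \eqref{c11}, nor in $A$, since $v_{\bar t}$ is a strict comparison subsolution there (Remark \ref{comp-not-touch}). A touching point in $\overline B_{r_3}(\bar x_0)$ contradicts \eqref{bound-around-barx0}. Thus $\bar t\ge \frac{c_0}{2}\delta$, and the conclusion follows as in \cite{FL3}.
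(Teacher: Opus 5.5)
Your proposal is correct in outline, but it takes a longer route than the paper. The paper does not import the tilted barrier of Lemma \ref{mainvar}. It keeps the barrier of Lemma 7.4 in \cite{FL3},
$v_t=\alpha(x_n+\sigma-\frac{\ep}{2}c_0\delta\psi+t\ep)^+-\beta(x_n+\sigma-\frac{\ep}{2}c_0\delta\psi+t\ep)^-$, with no factor $(1+c_1\ep)$.

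Since $\psi=1$ on $\partial B_{4/5}(\bar x_0)$, for $t<\frac{c_0}{2}\delta$ one has there $v_t=U_\beta(x_n+\sigma+(t-\frac{c_0}{2}\delta)\ep)<U_\beta(x_n+\sigma)\le u$. So \eqref{c11} is never needed. The only change from \cite{FL3} is the free boundary inequality, which reads $(v_{\bar t}^+)_\nu-G((v_{\bar t}^-)_\nu)=G(\beta)k-G(\beta k)>0$. This uses $k>1$ and the \emph{strict} decrease of $\eta^{-1}G(\eta)$.

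In the paper, therefore, strict monotonicity is already used in the subsolution step. Your remark that it is needed only for the supersolution step is an artifact of your scheme. Even there, a factor $(1-c_1\ep)$ would make non-strict monotonicity suffice.

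Your route gains that only non-strict monotonicity is used. The cost is that you must prove \eqref{c11} for $f\not\equiv0$, $p\neq2$, and this is the weak point of your write-up. Your hedge there points the wrong way:
\begin{itemize}
\item A single radial barrier cannot vanish on $\{x_n=-\sigma\}$ and give the lower bound $\hat c(x_n+\sigma)$ on $\bar R\cap B_{19/20}$.
\item The Pucci solution $\phi$ of Lemma \ref{mainvar} is exactly what works.
\end{itemize}
Take $h=\alpha(x_n+\sigma)+\frac12\alpha c_0\ep\phi-C\alpha\ep^2(x_n+\sigma)+C'\alpha\ep^2(x_n+\sigma)^2$. By \eqref{p(x)-vs-pucci} and superadditivity of $\mathcal M^-_{\lambda_0,\Lambda_0}$, $\Delta_p h\ge|\nabla h|^{p-2}\,2\lambda_0C'\alpha\ep^2\ge c\,C'\alpha^{p-1}\ep^2>\|f\|_{L^\infty(B_1)}$. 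The middle inequality holds because $|\nabla h|$ is comparable to $\alpha$: $\phi$ is Lipschitz up to $\partial R$, since it lies below $C(x_n+\sigma)$ and $C(1-|x|^2)$ and interior estimates apply.

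Write the quadratic correction in $x_n+\sigma$ rather than $x_n$, so that it is nonpositive on $\{x_n=-\sigma\}$ when $\sigma>0$, and take $C\ge 2C'$. Then $h\le u$ on $\partial R$, and weak comparison gives \eqref{c11}. With this filled in your argument closes, but the detour is unnecessary.
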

\begin{proof} The proof follows  as that of Lemma 7.4 in \cite{FL3}. In the present case, to obtain that $v_{\bar t}$ is a
strict subsolution to \eqref{fbtrueG(t)} in $A$, we use (H3'') and we immediately get
\begin{equation*}
(v_{\bar t}^+)_\nu-G((v_{\bar t}^-)_\nu)=G(\beta)k-G(\beta k)>0.
\end{equation*}
\end{proof}
\medskip

We finally obtain

\begin{proof}[\bf Proof of Theorem \ref{main_new-nondeg-general-G(t)}]  The proof follows as the one of Theorem 1.3 in \cite{FL3}, if we replace Lemma 7.4 in \cite{FL3} by Lemma \ref{mainvar-G(t)} above.
\end{proof}

\smallskip

We will now deduce some preliminary results to be applied in the proof of Theorem \ref{Lipschitz_cont} 

\begin{rem}\label{further_remark} 
Let us now consider $G(t,x):[0,\infty)\times B_1\rightarrow(0,\infty)$, $G(\cdot, x)\in C^2(0,\infty)$ for $x\in B_1$, satisfying  (P2) and (P3) in $B_1$. Given a sequence $C_k\to \infty$ we now define 
$$\tilde{G}_k(t,x):=\frac{G(C_kt, x)}{C_k},\qquad \mbox{ for }t>0 \mbox{ and }x\in B_1.$$ 

Let us  verify that $\tilde{G}_k$ satisfy (H1), (H2) and (H3') in $B_1$, with $\hat{\beta}=\frac{1}{4}$ and $L=C$ (where $C$ is a given universal constant), with constants independent of  $k$, for $k$ large.

Clearly, $\tilde{G}_k(t,\cdot)\in C^{0,\bar\gamma}(B_1)$ uniformly in $t\in [\frac{1}{8},4C]$, uniformly in $k$.

Due to the second condition in assumption (P2), we have,  for $t\in [\frac{1}{8},4C]$ and $x\in B_1$,
$$\Big|\frac{{\partial}^2\tilde{G}_k}{\partial t^2}(t,x)\Big|=\Big|C_k\frac{{\partial}^2{G}}{\partial t^2}(C_kt,x)\Big|\le \hat{C},$$
for some constant $\hat{C}$, if $k$ is large. This implies that $\tilde{G}_k(\cdot, x)\in C^{1,1}([\frac{1}{8}, 4C])$, for $x\in B_1$, uniformly in $k$, for $k$ large.

On the other hand, since $G$ satisfies assumption (P2) then, for some constant $t_0>0$,  
\begin{equation}\label{Gb}
\frac{1}{2} \le \frac{\partial G}{\partial t}(t,x) \le 2 \qquad\mbox{ for } t\ge t_0 \mbox{ and } x\in B_1.
\end{equation}
Then, for $t\ge t_0$ and $x\in B_1$,
\begin{equation*}
\frac{1}{2}(t-t_0) \le G(t,x) - G(t_0,x) = \int_{t_0}^t \frac{\partial G}{\partial t}(s,x) \,ds \le {2}(t-t_0),
\end{equation*}
and this implies, for $t\ge t_1$ and $x\in B_1$,
\begin{equation}\label{Ga}
\frac{1}{4}t \le G(t,x) \le 2(t-t_0) +\overline{C} \le 3t.
\end{equation}
Also, for $t\ge t_1$, $x\in B_1$ and large $k$,
\begin{equation*}
\frac{1}{4}t \le \tilde{G}_k(t,x)   \le 3t.
\end{equation*}
We deduce that for $t\in [\frac{1}{8}, 4C])$, $x\in B_1$ and $k$ large,
\begin{equation*}
c_1\le \tilde{G}_k(t,x)\le C_1,
\end{equation*}
where $c_1$ and $C_1$ are positive constants that depend only on $C$ and $G$ and not on $k$. We also observe that \eqref{Gb} and \eqref{Ga} yield, for $t\ge t_2$ and $x\in B_1$,
\begin{align*}
& \frac{\partial} {\partial t}(t^{-N}G(t,x))=t^{-N}(-N t^{-1}G(t,x)+\frac{\partial} {\partial t} G(t,x))\\
&\le t^{-N}(-N t^{-1}\frac{1}{4}t +2)\le 0,
\end{align*}
if $N\ge 8$. As a consequence, if $N\ge 8$, $t\ge \frac{1}{4}$, $x\in B_1$  and $k$ large,
\begin{equation*}
 \frac{\partial} {\partial t}(t^{-N}\tilde{G}_k(t,x))\le 0.
\end{equation*}
That is, we have shown that if $G$ satisfies   (P2) and (P3) in $B_1$, then $\tilde{G}_k$ satisfy (H1), (H2) and (H3') in $B_1$, with $\hat{\beta}=\frac{1}{4}$ and $L=C$, with constants independent of $k$, for $k$ large.

Then, if $f\equiv 0$ or $p=2$ we can apply Theorem \ref{main_new-nondeg-general-rhs0} or \ref{main_new-nondeg-general-p=2} for this family of functions, obtaining the conclusion with constants independent 
of $k$, for large $k$.

 If $f \not\equiv 0$ and $p\neq 2$ we assume in addition that $G$ satisfies (P4), then $\tilde{G}_k$ satisfy (H3''), for $t\in [\frac{1}{4}, 2C]$, $x\in B_1$  and $k$ large. 

Therefore, in this situation we can apply Theorem  \ref{main_new-nondeg-general-G(t)} for this family of functions obtaining  the conclusion with constants independent of $k$ as well, for large $k$.
 \end{rem}

\section{H\"older continuity}\label{section4}
In this section we prove the local H\"older continuity of viscosity solutions to problem \eqref{fbtrue}. The result holds for a very general class of problems of this type.

More precisely, we obtain Theorem \ref{holder_reg}, which is obtained under the following assumptions on $G$: we require that there exist constants  $\sigma>0$ and $M>0$ such that
\begin{equation*}
\sigma t\leq G(t, x)\leq \sigma^{-1}t,\quad \mbox{for}\:\:t>M,\: x\in B_1.
\end{equation*}

\smallskip

In the proof of Theorem \ref{holder_reg} we will use the following fundamental lemma.

\begin{lem}\label{weak-harnack-p-lap} Let $0<r\le 1$ and $f\in L^{\infty}(B_r)$ with $||f||_{L^{\infty}(B_r)}\le 1$. Let $u$ be a bounded weak subsolution to $\Delta_p u=f$ in $B_r$, $u\ge 0$. If 
\begin{equation}\label{cond-measure}
\frac{|\{u = 0\} \cap B_{r/2}|}{|B_{r/2}|} \geq \frac 12 \quad
\end{equation}
then,
$$\sup_{B_{r/2}} u \leq (1-\eta)\sup_{B_{r}}u+||f||^{1/p}_{L^{\infty}(B_r)}.$$
Here $0<\eta<1$ is a constant depending only on $n$ and $p$.
\end{lem}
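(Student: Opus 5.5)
The plan is to reduce to the homogeneous, normalized situation by scaling, and then combine a weak Harnack (measure-to-pointwise) estimate for nonnegative supersolutions with the measure condition \eqref{cond-measure}.

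Set $M:=\sup_{B_r}u$ and $\Lambda:=\|f\|_{L^\infty(B_r)}^{1/p}$. If $M\le \Lambda$ the claim is trivial, since $\sup_{B_{r/2}}u\le M\le \Lambda$. So assume $M>\Lambda$ and consider
\[
w:=M-u\ge 0 \quad\text{in } B_r .
\]
Since $u$ is a weak subsolution of $\Delta_p u=f$, the function $w$ is a weak supersolution of $\Delta_p w=-f$, i.e. $-\Delta_p w\ge f\ge -\|f\|_\infty$. Moreover, by \eqref{cond-measure}, $w=M$ on a set of measure at least half of $B_{r/2}$.

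Next rescale, setting $\tilde w(y):=w(ry)/M$ on $B_1$. Then
\[
-\Delta_p\tilde w\ge -\frac{r^p\|f\|_\infty}{M^{p-1}}\ge -\frac{\|f\|_\infty}{M^{p-1}},
\]
using $r\le 1$. This bound is at most $1$, but I also want smallness, which I handle as follows.

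The core step is the weak Harnack inequality for supersolutions of the $p$-Laplacian with bounded right-hand side (Trudinger; Malý--Ziemer). For a nonnegative weak supersolution of $-\Delta_p v\ge -\kappa$ in $B_1$ with $\kappa\le 1$,
\[
\Big(\fint_{B_{1/2}} v^{s}\Big)^{1/s}\le C\Big(\inf_{B_{1/4}} v+\kappa^{1/(p-1)}\Big),
\]
and a covering/chaining argument gives the same estimate with $\inf_{B_{1/2}}$ (shrinking the balls slightly if needed). Applying this to $\tilde w$, which is at least $1$ on half of $B_{1/2}$, gives
\[
c(n,p)\le \inf_{B_{1/2}}\tilde w + C\Big(\frac{\|f\|_\infty}{M^{p-1}}\Big)^{1/(p-1)} .
\]

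The remaining task is to choose exponents so the error term has the form $\|f\|^{1/p}$ as stated. Since $M>\Lambda$, we have $\|f\|/M^{p-1}< \Lambda^{p}/M^{p-1}$, which gives
\[
M\Big(\frac{\|f\|}{M^{p-1}}\Big)^{1/(p-1)}=\|f\|^{1/(p-1)}\le \Lambda^{p/(p-1)}.
\]
For this to be comparable to $\Lambda$, I split into two cases. If $\Lambda\le 1$, then $\Lambda^{p/(p-1)}\le \Lambda$, which is fine. If $\Lambda>1$, I instead use the crude bound: either $M\le K\Lambda$ for a large $K$, in which case the claim follows with $\eta$ small (since $(1-\eta)M+\Lambda\ge M$ once $\eta K\le 1$), or $M\ge K\Lambda$, in which case $\|f\|/M^{p-1}\le K^{1-p}\Lambda$. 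This needs care, and I may need to adjust the definition of $\Lambda$ or exploit the assumption $\|f\|\le 1$, which gives $\Lambda\le 1$ directly. Since $\|f\|_\infty\le1$, only the first case occurs. Undoing the normalization, $\inf_{B_{r/2}}w\ge cM-C\|f\|^{1/(p-1)}\ge cM-C\Lambda$, hence
\[
\sup_{B_{r/2}}u\le (1-c)M+C\Lambda .
\]

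Finally, I need to absorb the constant $C$ in front of $\Lambda$ into $1$. I will do this by again splitting: if $M\le 2C\Lambda/c$, then taking $\eta\le c/(2C)$ makes the claim trivial; otherwise $C\Lambda\le cM/2$, and so $\sup_{B_{r/2}}u\le(1-c/2)M$. Thus $\eta:=\min\{c/2,\,c/(2C)\}$ works. I expect the main obstacle to be quoting a weak Harnack inequality with the correct right-hand-side scaling for singular $p<2$ and degenerate $p>2$ alike; the rest is bookkeeping.
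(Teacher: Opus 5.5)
Your proposal is correct and follows essentially the paper's route: apply the weak Harnack inequality (Trudinger) to the nonnegative supersolution $\sup_{B_r}u-u$, use the measure condition to bound its $L^s$ norm on $B_{r/2}$ from below by a fixed multiple of $\sup_{B_r}u$, and rearrange. The differences are cosmetic. You normalize by $M$, use the sharper error term $\|f\|^{1/(p-1)}\le\|f\|^{1/p}$, and absorb the constant in front of $\|f\|^{1/p}$ by a case split. The paper instead writes the estimate as $M_r\le\tilde C(M_r-M_{r/2}+\|f\|^{1/p})$, which gives coefficient $1$ directly.
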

\begin{proof} We apply the weak Harnack inequality for the $p$-Laplace operator (see Theorem 1.2 and Corollary 1.1 in \cite{T}) to $v=M_r-u$, where $M_r=\sup_{B_r} u$. Then, we get, for some positive $C=C(n,p)$ and $\gamma=\gamma(n,p)$,
$$||v+\theta||_{L^{\gamma}(B_{r/2})}\le r^{n/\gamma}C\inf_{B_{r/2}}( v+\theta).$$  
Here
\begin{equation}\label{def-sigma}
\theta=\mu \frac{r}{2} + (\mu \frac{r}{2})^{p/(p-1)},\qquad \mu=||f||^{1/p}_{L^{\infty}(B_r)}.
\end{equation} 
As a consequence,
$$||M_r-u||^{\gamma}_{L^{\gamma}(B_{r/2})}\le C^{\gamma}\left[r^{n}\left(\inf_{B_{r/2}}( M_r-u+\theta)\right)^{\gamma} +||\theta||^{\gamma}_{L^{\gamma}(B_{r/2})}\right].$$  
Now, applying \eqref{cond-measure}, we get
$$\frac{1}{2}{M_r}^{\gamma}|B_{r/2}|\le   C^{\gamma}\left[r^n  \left(M_r-M_{r/2}+\theta\right)^{\gamma}+{\theta}^{\gamma}|B_{r/2}|\right].$$  
Recalling \eqref{def-sigma}, we obtain, for some $\widetilde{C}=\widetilde{C}(n,p)>1$,
$${M_r}\le  \widetilde{C}\left(M_r-M_{r/2}+||f||^{1/p}_{L^{\infty}(B_r)}\right).$$
Finally, denoting $1-\eta=\frac{\widetilde{C}-1}{\widetilde{C}}$, we deduce that
$$\sup_{B_{r/2}} u \leq (1-\eta)\sup_{B_{r}}u+||f||^{1/p}_{L^{\infty}(B_r)},$$
as claimed.
\end{proof}

\smallskip

We can now obtain

\begin{proof}[\bf Proof of Theorem \ref{holder_reg}] Without loss of generality we will assume that $u$ satisfies the statement in $B_2$.

We start by assuming that $0\in F(u)$ (the other case will be discussed at the end of the proof). 

For $C>0$, let us consider, for $x\in B_1$,
\begin{equation}\label{frac-u-C}
\hat u := \frac{u}{C},\quad \hat{f}:=\frac{f}{C^{p-1}}, \quad \hat G(t,x):= \frac{G(C t,x)}{C}.
\end{equation}
Then, $\hat{u}$ is a viscosity solution to
\begin{equation*}  
\left\{
\begin{array}{ll}
\Delta_{p} \hat{u} = \hat{f}, & \hbox{in $B_1^+(\hat{u})\cup B_1^-(\hat{u})$}, \\
\  &  \\
{\hat{u}}_\nu^+=\hat{G}({\hat{u}}_\nu^-,x), & \hbox{on $F(\hat{u})$}. 
\end{array}
\right.
\end{equation*}

We can choose the constant $C$ in \eqref{frac-u-C} large enough, depending on $\|u\|_{L^\infty(B_1)}$, $\|f\|_{L^\infty(B_1)}$, $n$, $p$, 
$\eps_0$ and $\eps_1$,  in such a way that
 $\|\hat{f}\|_{L^\infty(B_1)}\le \min{\left\{\left(\frac{\eta\varepsilon_0}{2}\right)^p,\, \ep_1,\, 1\right\}}$ and $\|\hat{u}\|_{L^\infty(B_1)}\le 1$, where $\eta$ is the constant given in Lemma 
\ref{weak-harnack-p-lap} and $\eps_0$ and $\ep_1$ are small universal constants, depending on $\sigma$, to be made precise later. 

We notice that the function $\hat{G}$ giving the free boundary condition for $\hat{u}$,   also satisfies $\eqref{G2}$, for $t>\frac{M}{C}$, $x\in B_1$.

Then, choosing the constant $C$ properly (depending on $M$ and $m$ as well) and renaming the functions to simplify the notation, we may assume that 
\begin{equation}\label{bounds-u-f}
\|u\|_{L^\infty(B_1)} \leq 1, \qquad \|{f}\|_{L^\infty(B_1)}\le \min{\left\{\left(\frac{\eta\varepsilon_0}{2}\right)^p,\, \ep_1,\, 1\right\}}
\end{equation}
 and that $G$ satisfies 
\begin{equation}\label{G2m}
\sigma t\leq G(t, x)\leq \sigma^{-1}t,\quad \mbox{for}\:\:t>m,\: x\in B_1,
\end{equation}
 with $m >0$ a  small universal constant, depending on $\sigma$, to be made precise later. 

\smallskip

We will next prove the following claim:

\smallskip

{\it Claim.} There exists a constant $0<\delta<1$ universal, depending on $\sigma,$ such that
\begin{equation}\label{claim-delta}
 \text{if $\|u\|_{L^\infty(B_1)} \leq 1$ then $\|u\|_{L^\infty(B_\delta)} \leq 1-\delta.$}
\end{equation}
Let us observe that, once the claim \eqref{claim-delta} is established, we get by rescaling that, if $\|u\|_{L^\infty(B_1)} \leq 1$, then 
\begin{equation}\label{gives-holder} 
\|u\|_{L^\infty(B_r)} \leq r^\alpha, \quad \text{for $r=1, \delta, \delta^2, \ldots,$}
\end{equation}
for $0<\alpha<1$ universal, $\alpha=\alpha (\delta)$.
In fact, we define, for $x \in B_1$,
$$\tilde{u}(x):= \frac{u(r x)}{r^\alpha}, \quad  \tilde{f}(x):= r^{p(1-\alpha)+\alpha}{f(rx)}, \quad \tilde{G}(t,x) := \frac{G(r^{\alpha-1}t, rx)}{r^{\alpha-1}}.$$
Then, $\tilde{u}$ is a viscosity solution to
\begin{equation*}  
\left\{
\begin{array}{ll}
\Delta_{p} \tilde{u} = \tilde{f}, & \hbox{in $B_1^+(\tilde{u})\cup B_1^-(\tilde{u})$}, \\
\  &  \\
{\tilde{u}}_\nu^+=\tilde{G}({\hat{u}}_\nu^-,x), & \hbox{on $F(\tilde{u})$}, 
\end{array}
\right.
\end{equation*}
where the functions $\tilde{u}$ and $\tilde{f}$ also satisfy \eqref{bounds-u-f}, and the  function $\tilde{G}$ giving the free boundary condition for $\tilde u$,   also satisfies $\eqref{G2m}$. Then, \eqref{gives-holder}  follows from \eqref{claim-delta} by induction.

\smallskip

There holds  that \eqref{gives-holder} implies the H\"older continuity of $u$.

\smallskip

To prove the claim \eqref{claim-delta}, we keep in mind \eqref{bounds-u-f}, and we observe first that $0 \leq u^\pm \leq 1$ and that $u^+$ and $u^-$ are viscosity subsolutions to $\Delta_{p} u=-|f|$ in $B_1$ (see, for instance, Proposition 2.8 in \cite{CC}). We here recall that the notions of viscosity and weak subsolution of the $p$-Laplacian are equivalent (see Remark \ref{equiv-not}).  Therefore, by Lemma \ref{weak-harnack-p-lap}  either of the following situations holds, for any $0<r\le 1$, with $0<\eta<1$ a universal constant:
\be\label{weak1}\text{If \quad $\frac{|\{u^- = 0\} \cap B_{r/2}|}{|B_{r/2}|} \geq \frac 12 \quad $ then \quad $\sup_{B_{r/2}} u^- \leq (1-\eta)\sup_{B_r}u^- + ||f||^{1/p}_{L^{\infty}(B_r)}$}
\ee
\be\label{weak2} \text{If \quad $\frac{|\{u^+ = 0\} \cap B_{r/2}|}{|B_{r/2}|} \geq \frac 12 $\quad  then \quad $\sup_{B_{r/2}} u^+ \leq (1-\eta) \sup_{B_r}u^+ + ||f||^{1/p}_{L^{\infty}(B_r)}$.}
\ee
We will next apply this alternative to a sequence of radii, $r_k= 2^{-k}$, $k=0,1,\ldots$. In fact, assume that  \eqref{weak1} holds at $k=0$. Two different cases can occur.

Fix $\bar k$ universal, depending on $\sigma$, to be determined later.

\smallskip

{\it Case 1.} For some $k \leq\bar k$, \eqref{weak2} holds for $r=r_k.$ Then the claim \eqref{claim-delta} is an immediate consequence.

\smallskip

{\it Case 2.} Every $k \leq \bar k$, satisfies \eqref{weak1}. Hence,
\begin{equation}\label{weak3} u^- \leq (1-\eta)^{\bar k} + \frac{1}{\eta}||f||^{1/p}_{L^{\infty}(B_1)}\leq 
(1-\eta)^{\bar k} + \frac{\eps_0}{2}  \leq \eps_0, \quad u^+ \leq 1 \quad \text{in $B_{r_{\bar k}}$,}\ee
where $\eps_0$ is the small universal constant, depending on $\sigma$, appearing in \eqref{bounds-u-f}, that will  be determined later, and $\bar k$ is large enough depending on  the chosen $\eps_0$. 

Let us  verify that,  in this case, 
\begin{equation}\label{u+1/2}
 u^+ \leq \frac 1 2 \quad \text{in $B_{r_{\bar k}/4}$.}
\end{equation}
Once \eqref{u+1/2} is obtained,  the claim \eqref{claim-delta} will follow in this case as well.

We assume by contradiction that, for some $x_0 \in B_{r_{\bar k}/4}$, there holds $$u(x_0) > \frac 1 2.$$ 
We denote  $B_d(x_0)$  the largest ball around $x_0$ which is contained in $B^+_{1}(u)$. Namely, there exists $y_0 \in F(u)$ such that 
$$d={\rm dist} (x_0, F(u))=|x_0-y_0| \leq r_{\bar k}/4.$$

Next, applying Harnack inequality (see, for instance, Theorem 1.1 and Corollary 1.1 in \cite{T}), we get
\begin{equation*}
\sup_{B_{d/2}(x_0)} u \leq C(\inf_{B_{d/2}(x_0)}u + \theta),
\end{equation*}
where $C$ depends only on $n$ and $p$ and
\begin{equation*}
\theta=\mu \frac{d}{2} + (\mu \frac{d}{2})^{p/(p-1)},\qquad \mu=||f||^{1/p}_{L^{\infty}(B_d(x_0))}.
\end{equation*} 
But
\begin{equation*}
\theta \le d \le r_{\bar k}/4 \le \frac{1}{4C},
\end{equation*}         
if we choose $\bar{k}$ so that, in addition, $\bar{k}\ge k_0(n,p)$. Then,
\be\label{harnack}u \geq c_0 \quad \text{in $B_{d/2}(x_0)$},\ee
with $c_0=c_0(n,p)>0$. We now define
\begin{equation*}
 \psi(x):= \begin{cases}c(|x-x_0|^{-\gamma} - d^{-\gamma}) & \text{if $|x-x_0| \geq d/2$}\\
c_0 & \text{if $|x-x_0| \leq d/2$,}
\end{cases}
\end{equation*}
where
\begin{equation}\label{choice-of-c}
c=\frac{c_0 d^{\gamma}}{2^{\gamma}-1}.
\end{equation}
Here  $c$ is chosen in order that $\psi$ be continuous on $\p B_{d/2}(x_0)$. Next, we denote  $\lambda_0=\min\{1,p-1\}$, $\Lambda_0=\max\{1,p-1\}$, and we observe that, for $x\notin B_{d/2}(x_0)$,
\begin{equation*}
\mathcal M^-_{\lambda_0,\Lambda_0}(D^2 \psi)
 =  c \gamma|x- x_0|^{-\gamma-2}(\lambda_0(\gamma+1) -\Lambda_0(n-1)) \end{equation*}
(see, for instance, Lemma 2.5 in \cite{DFS2}). We now fix $\gamma$,
\begin{equation*}   
\gamma=\gamma(n,p)=\max{\left\{\frac{2\Lambda_0}{\lambda_0}(n-1)-1,1\right\}}.
\end{equation*}

Then, using  \eqref{p(x)-vs-pucci} and \eqref{choice-of-c} we obtain that, in $B_{2d}(x_0) \setminus \overline{B_{d/2}(x_0)}$,
\begin{equation*}
\begin{aligned}
\Delta_{p} \psi &\ge |\nabla \psi|^{p-2}\mathcal{M}_{\lambda_0,\Lambda_0}^-(D^2\psi)\\
&\ge (c\gamma |x- x_0|^{-\gamma-1})^{(p-2)}
c \gamma|x- x_0|^{-\gamma-2}(\lambda_0(\gamma+1) -\Lambda_0(n-1))\\
&\ge (c\gamma)^{p-1} |x- x_0|^{\gamma(1-p) -p} \Lambda_0(n-1) 
\ge (\frac{c_0 d^{\gamma}}{2^{\gamma}-1}\gamma)^{p-1}(2d)^{\gamma(1-p) -p} \Lambda_0(n-1)\\
&\ge(\frac{c_0 }{2^{\gamma}-1}\gamma)^{p-1}2^{\gamma(1-p) -p} d^{-p}\Lambda_0(n-1)
\ge (\frac{c_0 }{2^{\gamma}-1}\gamma)^{p-1}2^{\gamma(1-p) -p} \Lambda_0(n-1).
\end{aligned}
\end{equation*}

We now define
$$w:= \psi^+ - \frac \sigma 2 \psi^-,$$
with $\sigma$ the constant in \eqref{G2m},   $$D:= B_{2d}(x_0) \setminus \overline{B_{d/2}(x_0)} \subset B_{r_{\bar k}},$$
and we choose
\begin{equation*}
\varepsilon_1=\varepsilon_1(n,p,\sigma)= \min{\left\{1,\left(\frac{\sigma}{2}\right)^{(p-1)}\right\}} \, \left((\frac{c_0 }{2^{\gamma}-1}\gamma)^{p-1}2^{\gamma(1-p) -p} \Lambda_0(n-1)\right).
\end{equation*}

Then, recalling \eqref{bounds-u-f}, we obtain
\begin{equation*}
\Delta_{p} w \ge ||f||_{L^{\infty}(B_1)}\qquad \text{ in } D\cap\left(\{w>0\}\cup\{w<0\}\right).
\end{equation*}

Let us now show that
\begin{equation}\label{claim-on-D}
u \geq w \quad \text{ on } \ D,
\end{equation}
if $\eps_0$ is sufficiently small.

In fact, on one hand,  $u \geq w$ in $\overline{B_{d}(x_0)}$ follows as a consequence of  \eqref{harnack} and  the maximum principle.

On the other hand, it is not hard to check that $u \geq w$ in $\{u\geq 0\} \cap (B_{2d}(x_0) \setminus \overline{B_{d}(x_0)}).$ We next want to show that $u \geq w$ in the set $\{u< 0\} \cap (B_{2d}(x_0) \setminus \overline{B_{d}(x_0)}).$ To be able to employ  the maximum principle we only need to see that $u \geq w$ on $\p B_{2d}(x_0) \cap \{u<0\}.$ We use that, by \eqref{weak3} and \eqref{choice-of-c}, in this set we have $u \geq -\eps_0$ and 
$$
w=-\frac{\sigma}{2}\, \frac{c}{d^{\gamma}}\, \frac{2^{\gamma}-1}{2^{\gamma}}=-\frac{\sigma}{2}\,  \frac{c_0}{2^{\gamma}}.
$$
 Therefore, it is enough to fix $\eps_0$ small universal, depending on $\sigma$, and then  \eqref{claim-on-D} follows.

We now observe that, since \eqref{claim-on-D} holds and $u$ and $w$ touch at $y_0 \in F(u)$, then, by Definition \ref{defnhsol1bis}-(ii)(1), we have 
$$|\nabla \psi(y_0)|\le G(\frac \sigma 2|\nabla \psi (y_0)|, y_0).$$

However,  the application of \eqref{G2m} and \eqref{choice-of-c} yields 
$$|\nabla \psi(y_0)|> G(\frac \sigma 2|\nabla \psi (y_0)|, y_0),$$
provided $m$ is chosen small enough universal, depending on $\sigma$. Namely,
$$\frac \sigma 2 |\nabla \psi (y_0)|= \gamma \frac \sigma 2 c d^{-\gamma-1}> \gamma \frac{\sigma}{2}\, \frac{c_0}{2^{\gamma}-1}> m.$$
This gives a contradiction and proves \eqref{u+1/2}, which implies the claim \eqref{claim-delta}.

\smallskip

In order to conclude the proof, let us now suppose that $0 \not \in F(u).$ If $B_{1/2} \cap F(u) =\emptyset$,  we can apply  interior estimates for the $p$-Laplace operator (see, for instance, Proposition 2.2 in \cite{Fan}). If there is ${\hat{x}} \in B_{1/2} \cap F(u)$, the argument above in $B_{1/2}(\hat{x})$ gives the
H\"older bound in $B_{1/4}(\hat{x}).$ We finally use a covering argument and in this way we obtain the stated result.
\end{proof}

\section{Lipschitz continuity}\label{section5}

In this section we prove the local Lipschitz continuity of viscosity solutions to \eqref{fbtrue}, namely, Theorem \ref{Lipschitz_cont}. 
This proof will make use of regularity theorems obtained in Section \ref{section3} (i.e., Theorems \ref{main_new-nondeg-general-rhs0}, \ref{main_new-nondeg-general-p=2} and \ref{main_new-nondeg-general-G(t)}).
We emphasize that the application of these results requires a careful preliminary preparation (see Remark \ref{further_remark}).

\smallskip

We will first prove an approximation lemma that will be a key device for Theorem \ref{Lipschitz_cont}.

\begin{lem}\label{limit_problem_second}
Let $u_k$ be a  viscosity solution to 
\begin{equation*}  
\left\{
\begin{array}{ll}
\Delta_{p} u_k = f_k, & \hbox{in $B_1^+(u_k)\cup B_1^-(u_k)$}, \\
\  &  \\
(u_k)_\nu^+=G_k((u_k)_\nu^-,x), & \hbox{on $F(u_k)$}, 
\end{array}
\right.
\end{equation*}
with  $f_k\in L^{\infty}(B_1)$ and continuous in $B_1^+(u_k)\cup B_1^-(u_k)$  and assume that  $G_k$ satisfies (P1) in $B_1$. Assume moreover that 
\begin{equation}\label{convergenceslabel}
u_k\to u^*,
\end{equation}
\begin{equation*}
f_k\to 0,
\end{equation*}
and
\begin{equation}\label{convergenceslabel3}
G_k(x,t)\to G^*(t),
\end{equation}
with
\begin{equation}\label{convergenceslabel4}
 G^*(t)=t,
\end{equation}
where the convergences hold uniformly on compact sets. Then,  
$$
\Delta_pu^*=0\quad \mbox{in}\:\:B_1.
$$
\end{lem}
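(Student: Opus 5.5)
We show that $u^*$ is a viscosity solution of $\Delta_p u^*=0$ in all of $B_1$, in the sense of Definition \ref{defnhsolviscosity}; by Remark \ref{equiv-not} this is equivalent to the weak formulation. We argue only the supersolution property; the subsolution property is symmetric. So let $\varphi\in C^2$ touch $u^*$ from below at $x_0\in B_1$ with $\nabla\varphi(x_0)\neq 0$. We must show $\Delta_p\varphi(x_0)\le 0$. Suppose instead that $\Delta_p\varphi(x_0)>0$.

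First we make the contact strict. Replace $\varphi$ by $\varphi-|x-x_0|^4$, which leaves $\nabla\varphi(x_0)$ and $D^2\varphi(x_0)$ unchanged. By continuity there is a small ball $B_\rho(x_0)$ on which $\nabla\varphi\neq0$ and $\Delta_p\varphi\ge c>0$. Let $\varphi_k:=\varphi+c_k$ with the constants $c_k$ chosen so that $\varphi_k$ touches $u_k$ from below at some point $x_k$. By the uniform convergence \eqref{convergenceslabel}, $x_k\to x_0$ and $c_k\to0$. We now split according to the location of $x_k$.

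\emph{Case A: $x_k\in B_1^+(u_k)\cup B_1^-(u_k)$.} Here the viscosity supersolution property of $u_k$ gives $\Delta_p\varphi(x_k)\le f_k(x_k)\to0$. Passing to the limit gives $\Delta_p\varphi(x_0)\le0$, a contradiction.

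\emph{Case B: $x_k\in\partial\{u_k\le 0\}^\circ$ but not in $F(u_k)$.} Then $u_k\le0$ near $x_k$, with $x_k$ on the boundary of the interior set. This case can be absorbed by a slight perturbation argument, or it can be handled directly via the equation in the negative phase.

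\emph{Case C: $x_k\in F(u_k)$.} This is the main case and the main obstacle. Set $\nu=\nabla\varphi(x_0)/|\nabla\varphi(x_0)|$. Since $\varphi_k(x_k)=u_k(x_k)=0$ and $\nabla\varphi_k(x_k)\neq0$, the level set $\{\varphi_k=0\}$ is a $C^2$ surface through $x_k$. We modify $\varphi_k$ to a two-phase test function
\[
\phi_k:=(1+\delta)\varphi_k^+-(1-\delta)\varphi_k^-\quad\text{or a similar variant.}
\]
Since $\varphi_k\le u_k$, the nonnegative part touches $u_k$ correctly, and the negative part gives a larger function where $\varphi_k<0$. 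Hence some version of this construction still lies below $u_k$ near $x_k$ and touches at $x_k$, with slopes $a_k\approx|\nabla\varphi(x_0)|$ and $b_k\approx|\nabla\varphi(x_0)|$. To violate the free boundary condition of Definition \ref{defnhsol1bis}-(ii)(1) we need $a_k>G_k(b_k,x_k)$. Since $G_k\to G^*(t)=t$ uniformly by \eqref{convergenceslabel3}--\eqref{convergenceslabel4}, it suffices to take $a=(1+\delta)|\nabla\varphi|$ and $b=(1-\delta)|\nabla\varphi|$. The difficulty is that scaling the positive part up destroys the inequality $\phi_k\le u_k$.

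The fix is to shift rather than scale. We subtract a small amount $\eta|x-x_k|^2$, or rather tilt, using a family
\[
\varphi_{k,s}=\varphi_k-s\,(\text{positive quadratic}),
\]
and we exploit the strict inequality $\Delta_p\varphi>0$ to retouch. Concretely, we work with the piecewise function $\Phi=a\varphi^+-b\varphi^-$, where $a=|\nabla\varphi(x_0)|^{-1}(1+\delta)$-normalized slopes are chosen so that $a>b$ and $\Delta_p\Phi>0$ in each phase. We then slide $\Phi$ from below until first contact with $u_k$. Contact in the phases contradicts Case A. Contact on $F(u_k)$ contradicts the free boundary condition, because $a>G_k(b,x)$ for $k$ large. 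Contact on the boundary of the small ball is excluded by the strict touching. The two remaining things to check are that $\Phi\le u^*+o(1)$ initially and that sliding does produce a contact; both follow from strict touching for $\delta$ small. The subsolution case is identical, using Definition \ref{defnhsol1bis}-(ii)(2) and $a<G_k(b,x)$.
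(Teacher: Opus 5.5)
This is essentially the paper's proof. Your Case A is the standard stability argument that the paper imports from its earlier work for $\{u^*\neq 0\}$, and your final construction in Case C is exactly the paper's: replace the test function by $(1+\eps)\varphi^+-\varphi^-$, translate it (the paper uses $\psi(x+te_n)$) until first contact with $u_k$, and contradict Definition \ref{defnhsol1bis}-(ii)(1) at a free boundary contact because $G_k\to G^*(t)=t$.

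Two small clean-ups are needed. First, your Case B is empty, since $B_1\setminus(B_1^+(u_k)\cup B_1^-(u_k))=F(u_k)$. Second, ``slide'' must be a translation that moves the kink together with the zero set, e.g.\ $\Phi(x+t\nu)$ or $a(\varphi+s)^+-b(\varphi+s)^-$. Adding a constant to $\Phi$ would instead give free boundary contacts with equal slopes on both sides, and hence no contradiction.
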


\begin{proof}  Arguing in a similar way as in \cite{FL2}, Theorem 1.2, Step III (letting $p_k(x)\equiv p$ in that proof), we show  that 
\begin{equation*}
\Delta_pu^*=0\ \quad \text{in $B_1 \cap \{u^*\neq 0\}$}.
\end{equation*}

We will next prove that the equation is satisfied across $\{u^*=0\}.$ In fact, we have to see that if $P$ is a quadratic polynomial with $\Delta_pP >0,$ then $P$ can not touch $u^*$ strictly from below at a point $x^* $ where $u^*(x^*)=0$  and $\nabla P(x^*) \neq 0.$  Suppose by contradiction that there exists such a point.

 Without loss of generality, we will assume that $\nabla P(x^*)= \gamma e_n$, with $\gamma >0.$

We define
$$\ \psi:=(1+\eps) P^+ - P^-.$$
For some $\eps>0$ small, $\psi$ still separates strictly from $u^*$ on  $\p B_\rho (x^*)$, for  $\rho>0$ small enough, and coincides with it at $x^*$.
Let us consider $$\psi_t(x)= \psi(x+ t e_n), \quad x \in B_\rho(x^*).$$ Then, for $t=-C \eps$, $C>0$ large, we have that $\psi_t$ is strictly below all $u_k$'s with $k$ large enough, since \eqref{convergenceslabel} holds.  We increase $t$ until a small $c_0 >0$ to guarantee that $\psi_t$ crosses $u^*$ and  all the $u_k$'s, with $k$ large. Hence, $\psi_t$ must touch the $u_k$'s for the first time at $t=t_k$ small. Since the separation of $\psi$ and $u^*$ on $\p B_\rho(x^*)$ is strict, the first touching point $x_k$ can not occur there (if $c_0$ is small depending on the separation on $\p B_\rho(x^*)$).
Recalling that $\Delta_pP>0$  and $f_k\to 0$ uniformly, as $k\to \infty,$ we conclude that $x_k \in F(u_k).$ However, taking into account \eqref{convergenceslabel3} and \eqref{convergenceslabel4}, we deduce that $$(1+\eps)|\nabla P(x_k+ t_k e_n)| > G_k(|\nabla P(x_k+ t_k e_n)|, x_k+ t_k e_n).$$ Then, we get a contradiction to Definition \ref{defnhsol1bis} (ii)-(1) for $u_k$ and the result follows.
\end{proof}

The main tool in the proof of Theorem \ref{Lipschitz_cont} is the following result

\begin{lem}\label{boundness}
Let $u$ be a viscosity solution to \eqref{fbtrue} in $B_2$. Assume $G$ satisfies assumptions (P2) and (P3) in $B_1$ and $0\in F(u)$. If $f\not\equiv 0$ and $p\neq 2$ we assume that (P4) also holds.

There exist positive constants $L_0$, $\delta$ and $C$ (depending only on $n$, $p$, $G$ and $\|f\|_{L^\infty(B_2)}$) such that one of the following alternative holds:
\begin{itemize}
\item[(i)] $u$ is Lipschitz in $B_{\delta}$ and $|\nabla u|\leq C\max\{\|u\|_{L^{\infty}(B_1)}, L_0\}$ in $B_{\delta}$.
\item[(ii)] $\frac{1}{\delta}\|u\|_{L^{\infty}(B_\delta)}\leq \frac{1}{2}\max\{\|u\|_{L^{\infty}(B_1)}, L_0\}$.
\end{itemize}
\end{lem}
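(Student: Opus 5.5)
We argue by contradiction and compactness, following the strategy of \cite{DS}. If the lemma fails, then for every choice of $L_0=k$ and of constants $\delta,C$ we can find a solution violating both alternatives. Concretely, fix $\delta$ small and $C$ large, universal, to be chosen below. Suppose there are solutions $u_k$ to \eqref{fbtrue} in $B_2$, with the same $G$ and $f$, $0\in F(u_k)$, and $M_k:=\max\{\|u_k\|_{L^\infty(B_1)},k\}\to\infty$, for which both (i) and (ii) fail. We normalize
$$\tilde u_k:=\frac{u_k}{M_k},\qquad \tilde f_k:=\frac{f}{M_k^{p-1}},\qquad \tilde G_k(t,x):=\frac{G(M_kt,x)}{M_k},$$
so that $\|\tilde u_k\|_{L^\infty(B_1)}\le1$, $\tilde f_k\to0$ uniformly, and, by (P2) (as in Remark \ref{further_remark}, via \eqref{Ga} and the integration of $\partial_tG\to1$), $\tilde G_k(t,x)\to t$ locally uniformly for $t>0$. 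Moreover \eqref{Ga} gives \eqref{G2} for $\tilde G_k$ with $\sigma=1/4$ and $M$ replaced by $M/M_k\le M$. Hence Theorem \ref{holder_reg} yields uniform $C^{0,\alpha}(B_{1/2})$ bounds, and a subsequence $\tilde u_k\to u^*$ uniformly on $\overline B_{1/2}$, with $u^*(0)=0$ and $\|u^*\|_{L^\infty(B_{1/2})}\le1$. Since $\tilde u_k$ solves the rescaled problem with $(\tilde f_k,\tilde G_k)$, Lemma \ref{limit_problem_second} (applied after rescaling $B_{1/2}$ to $B_1$) gives $\Delta_pu^*=0$ in $B_{1/2}$.

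Next we use the interior $C^{1,\widetilde\gamma}$ estimates for $p$-harmonic functions: $|\nabla u^*|\le C_0$ in $B_{1/4}$ and $|u^*(x)-\nabla u^*(0)\cdot x|\le C_0|x|^{1+\widetilde\gamma}$, with $C_0$ universal. We split into two cases according to the size of $|\nabla u^*(0)|$, and choose $\delta$ so small that $C_0\delta^{\widetilde\gamma}$ is much smaller than $1/4$.

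\emph{Case A: $|\nabla u^*(0)|\le 1/4$.} Then $\frac1\delta\|u^*\|_{L^\infty(B_\delta)}\le\frac14+C_0\delta^{\widetilde\gamma}<\frac13$. By uniform convergence, $\frac1\delta\|\tilde u_k\|_{L^\infty(B_\delta)}\le\frac12$ for large $k$. This is exactly (ii) for $u_k$, a contradiction.

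\emph{Case B: $|\nabla u^*(0)|=:\beta_*\in(1/4,C_0]$.} Rotate so that $\nabla u^*(0)=\beta_*e_n$. Rescale by $v_k(x):=\tilde u_k(\rho x)/\rho$ with $\rho$ small universal. Then $v_k$ is $\bar\eps$-close in $B_1$ to the two-plane function $\beta_*x_n$, which is close to $U_{\beta_*}$ for the data $\tilde G_k(\cdot,\rho x)$ once $k$ is large, since $\tilde G_k(\beta_*,0)\to\beta_*$. The rescaled right-hand side is $\rho\tilde f_k$, which tends to $0$. By Remark \ref{further_remark}, the $\tilde G_k$ satisfy (H1), (H2), (H3'), or (H3'') under (P4), with $\hat\beta=1/4$ and $L$ universal, uniformly in $k$. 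The Hölder seminorm condition in $x$ follows from (P3): $[\tilde G_k(\eta,\cdot)]_{C^{0,\bar\gamma}}\le[G]/M_k\to0$. Therefore Theorem \ref{main_new-nondeg-general-rhs0}, \ref{main_new-nondeg-general-p=2} or \ref{main_new-nondeg-general-G(t)} applies, according to whether $f\equiv0$, $p=2$, or (P4) holds. It gives $F(v_k)\in C^{1,\gamma}$ and a uniform bound $|\nabla v_k|\le C_1$ on both sides of $F(v_k)$ in $B_{1/2}$. Since $v_k$ is continuous across $F(v_k)$, $v_k$ is Lipschitz there. Scaling back, $|\nabla u_k|\le C_1M_k$ in $B_{\rho/2}$. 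Taking $\delta\le\rho/2$ and $C\ge C_1$ gives (i), again a contradiction.

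The main obstacle is Case B: one must check that the flatness hypothesis of the Section \ref{section3} theorems is met with the correct two-plane profile $U_\beta$. That profile is $\alpha=\tilde G_k(\beta,0)\ne\beta$, while $u^*$ is close to the one-slope plane. The way through is to use $\tilde G_k(\beta,0)-\beta\to0$ and absorb the discrepancy into $\bar\eps$ by taking $k$ large. A second check is that the constants in those theorems are uniform in $k$, which is precisely what Remark \ref{further_remark} was prepared to provide. Finally, the order of quantifiers matters: $\delta$ and $C$ are fixed from $C_0,\rho,C_1$, which are universal, before the contradiction sequence is taken.
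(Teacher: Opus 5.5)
Your proposal is correct and follows the paper's proof essentially step by step. The shared steps are: normalization by $\max\{\|u_k\|_{L^\infty(B_1)},L_k\}$; compactness via Theorem \ref{holder_reg} and Lemma \ref{limit_problem_second}; the $C^{1,\alpha}$ expansion of the $p$-harmonic limit; and the dichotomy $|a|\le 1/4$, which gives (ii), versus $|a|>1/4$, which gives (i) through the flatness theorems of Section \ref{section3} and Remark \ref{further_remark}. Your explicit check that $[\tilde G_k(\eta,\cdot)]_{C^{0,\bar\gamma}}\to 0$ is a detail the paper leaves implicit.

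The one adjustment is to let the contradiction sequence use right-hand sides $f_k$ with $\|f_k\|_{L^\infty(B_2)}\le M_0$, as the paper does, rather than a fixed $f$. This makes $L_0$ depend only on $\|f\|_{L^\infty(B_2)}$, which is needed when the lemma is applied to the rescalings $\delta^k f(\delta^k\cdot)$ in the proof of Theorem \ref{Lipschitz_cont}. Your argument goes through verbatim, since it only uses the bound on $\|f\|$.
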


\begin{proof}  Let $\delta>0$ be fixed,  to be precised later. We will assume by contradiction that there exist a sequence of constants $L_k \to \infty$, and a sequence of  solutions $u_k$ to 
\begin{equation*} 
\left\{
\begin{array}{ll}
\Delta_{p} u_k = f_k, & \hbox{in $B_2^+(u_k)\cup B_2^-(u_k)$}, \\
\  &  \\
(u_k)_\nu^+=G((u_k)_\nu^-,x), & \hbox{on $F(u_k)$}, 
\end{array}
\right.
\end{equation*}
  in $B_2$, with $0\in F(u_k)$ and $\|f_k\|_{L^\infty(B_2)}\le M_0$ for some constant $M_0$, such that
 $u_k$ does not satisfy neither option $(i)$ nor $(ii).$
 Let $$C_k:=\max\{\|u_k\|_{L^\infty(B_1)}, L_k\} $$ and 
$$\tilde u_k := \frac{u_k}{C_k},\quad \tilde{f}_k:=\frac{f_k}{C_k^{p-1}}, \quad \tilde G_k(t,x):= \frac{G(C_k t,x)}{C_k}.$$
Then, every $\tilde{u}_k$ is a viscosity solution to
\begin{equation*}  
\left\{
\begin{array}{ll}
\Delta_{p} \tilde{u}_k = \tilde{f}_k, & \hbox{in $B_2^+(\tilde{u}_k)\cup B_2^-(\tilde{u}_k)$}, \\
\  &  \\
(\tilde{u}_k)_\nu^+=\tilde{G}_k((\tilde{u}_k)_\nu^-,x), & \hbox{on $F(\tilde{u}_k)$},
\end{array}
\right.
\end{equation*}
with
\begin{equation*}  
  \|\tilde{u}_k\|_{L^\infty(B_1)} \le 1
\end{equation*}
and
\begin{equation}\label{tildefk}
\|\tilde{f}_k\|_{L^\infty(B_1)} \to 0, \quad \text{ as }k\to \infty.
 \end{equation}

 Now, using  (P2) (first assumption) and (P3), we can apply Theorem \ref{holder_reg} and conclude  that, as $k\to \infty$, for a subsequence,
\begin{equation*}  
   \tilde G_k(t,x) \to  G^*(t), \quad \text{$G^*(t)=t$},
\end{equation*}
 uniformly on compact sets of $(0,\infty)\times B_2$ and
\begin{equation*}
	\tilde u_k \to u^*,
 \end{equation*} 
uniformly on compact sets of $B_1$.

We can thus employ the compactness  Lemma \ref{limit_problem_second} to deduce that  
\begin{equation*}
\Delta_p u^* = 0 \quad \text{in $B_1$}, \quad \text{ with } \ \|u^*\|_{L^\infty(B_1)} \le 1.
\end{equation*}

Next, from the local $C^{1,\alpha}$ estimates for the $p$-Laplace operator  (see, for instance, \cite{Fan}, Theorem 1.1), we conclude that 
\be\label{c1}\|u^* - l\|_{L^\infty(B_r)} \leq C r^{1+\alpha}, \quad \text{ for all }  r \leq 1/2.\ee
Here  $C$ and $\alpha$ depend only on $n$ and $p$, and $l(x)=a \cdot x$ for a vector $a \in \R^n$, with $|a| \leq C$.

Two different cases can occur: 

\smallskip

{\it Case 1.} $|a| \leq \frac 1 4.$

In this case, \eqref{c1} yields $$\frac {1}{\delta}|u^*| \leq \frac 1 4 + C\delta^\alpha \leq \frac 1 3 \quad \text{in $B_\delta$,}$$
if $\delta\le 1/2$ is chosen small enough depending only on $n$ and $p$. 
In this way, all $u_k$'s with $k$ large satisfy $(ii)$, which gives a contradiction.

\smallskip

{\it Case 2.} $|a| > \frac 1 4$

In this case we will apply  flatness regularity results, derived from \cite{FL3}, that we developed in Section \ref{section3} (see Theorems \ref{main_new-nondeg-general-rhs0}, \ref{main_new-nondeg-general-p=2} and
\ref{main_new-nondeg-general-G(t)}, and Remark \ref{further_remark}).

In fact, using that $\tilde u_k$ converges uniformly to $u^*$ and \eqref{c1} holds, we obtain, for  every $k$ large,
\be\label{c1k}|\tilde u_k - a\cdot x| \leq \hat{C}\delta^{1+\alpha}, \quad \text{in $B_{2\delta}$},\ee
with $\hat{C}=\hat{C}(n,p)$. Let us now define
$$\beta_k:=|a|, \quad \alpha_k = \tilde G_k(|a|,0), \quad \omega:= \frac{a}{|a|}.$$

Then, from \eqref{c1k}, recalling that $\tilde G_k$ converges  to the identity uniformly on compact sets, we deduce
$$|\tilde u_k - U_{\beta_k}| \leq 2\hat{C}\delta^{1+\alpha}, \quad \text{in $B_{2\delta}$},$$
for $k$ large, where
$$U_{\beta_k}(x):= \alpha_k(x \cdot \omega)^+ - \beta_k(x \cdot \omega)^-.$$

Now, employing \eqref{tildefk}, we conclude from Theorems \ref{main_new-nondeg-general-rhs0}, \ref{main_new-nondeg-general-p=2} and \ref{main_new-nondeg-general-G(t)} (see Remark \ref{further_remark}) that, for large $k$,
$$ \tilde u_k\in C^{1,\gamma}(\overline{B^+_{\delta}(\tilde u_k)})\cap C^{1,\gamma}(\overline{B^-_{\delta}(\tilde u_k)}),$$ 
with $C^{1,\gamma}$ norms bounded by a universal constant, if $\delta$ is chosen small enough universal.

As a consequence, the $\tilde u_k$'s are uniformly Lipschitz with universal constant. Therefore,
$$|\nabla u_k| \leq C C_k \quad \text{in $B_\delta$},$$
for $k$ large.
This is in contradiction with the fact that the $u_k$'s do not satisfy $(i)$ and concludes the proof.
\end{proof}

We are finally ready to prove our main result

\begin{proof}[\bf Proof of Theorem \ref{Lipschitz_cont}] 
Let $L_0$, $\delta$ and $C$ be  the universal constants in Lemma \ref{boundness}.

We first assume $0 \in F(u)$ and define
$$L:= \max\{\|u\|_{L^\infty(B_{3/4})},L_0\}$$
and
$$a(r):=\frac 1 r \|u\|_{L^\infty(B_r)}, \quad r \leq 3/4.$$
We want to obtain  that
\be\label{claim1} a(\delta^k) \leq C L, \quad \forall k\geq 1.\ee

In fact, by Lemma \ref{boundness} either alternative $(i)$ or $(ii)$ holds. 

In case alternative $(i)$  holds, $u$ is Lipschitz in $B_\delta$ and 
$$|\nabla u| \leq C L \quad \text{in $B_{\delta}$}.$$
 This shows that \eqref{claim1} is  satisfied for every $k \geq1.$

In case alternative $(ii)$ holds, we get 
$$ a(\delta) \leq \frac 1 2 \max\{\|u\|_{L^\infty(B_{3/4})},L_0\}  \leq L.$$
Let us next rescale and iterate. We define, for $k\geq 1$ and $x\in B_1$,
$$u_k(x):= \frac{u(\delta^k x)}{\delta^k},\quad {f}_k(x):=\delta^k f(\delta^k x), \quad  G_k(t,x):= G(t,\delta^kx).$$

Then, every ${u}_k$ is a viscosity solution to
\begin{equation*} 
\left\{
\begin{array}{ll}
\Delta_{p} {u}_k = {f}_k, & \hbox{in $B_1^+({u}_k)\cup B_1^-({u}_k)$}, \\
\  &  \\
({u}_k)_\nu^+={G}_k(({u}_k)_\nu^-,x), & \hbox{on $F({u}_k)$}. 
\end{array}
\right.
\end{equation*}
Moreover,
$$\|{f}_k\|_{L^\infty(B_1)}\le \|{f}\|_{L^\infty(B_1)}.$$

Then,  Lemma \ref{boundness} applies to every $u_k$ and thus, each  $u_k$ satisfies the conclusion of this lemma.

Suppose first that the $u_k$'s satisfy indefinitely the  alternative $(ii)$ of Lemma \ref{boundness}. Therefore,  
\be\label{claim2}a(\delta^k) \leq L, \quad \forall k\geq 1,\ee
and then, \eqref{claim1} holds.

Otherwise, let us denote  $\bar k \geq 1$  the smallest $k$ for which $u_k$ does not satisfy $(ii)$. It is not hard to see that \eqref{claim2} holds for all $1 \leq k \leq \bar k$ and the same happens with \eqref{claim1}.  Moreover,  
$u_{\bar k}$ satisfies the  alternative $(i)$ of Lemma \ref{boundness}, which implies that $u_{\bar k}$ is Lipschitz in $B_{\delta},$ with
$$|\nabla u_{\bar k}| \leq C \max\{\|u_{\bar k}\|_{L^\infty(B_{3/4})}, L_0\} \quad \text{in $B_{\delta}.$}$$
Now, \eqref{claim2} for $k=\bar k$ gives
$$|\nabla u_{\bar k}| \leq C \max\{\frac{1}{\delta}\|u_{\bar k-1}\|_{L^\infty(B_\delta)}, L_0\} \leq CL \quad \text{in $B_\delta$.}$$
Therefore \eqref{claim1} holds for every $k \geq \bar k +1$ as well.

  We now take $0<r<3/4$ and $k$ such that ${\delta}^{k+1}\le r \le {\delta}^{k}$. Then \eqref{claim1} implies
$$||u||_{L^{\infty}(B_r)} \leq ||u||_{L^{\infty}(B_{{\delta}^{k}})}\le {CL}{\delta}^{k}\le \frac{CL}{\delta} r.$$
The Lipschitz continuity of $u$ in $B_{1/2}$ then follows, which is the desired result.
\end{proof}

\section*{Acknowledgment }
The authors wish to thank Ovidiu Savin for pointing them out the subject of this paper and also for some helpful discussions.

\end{document}